\newcommand{\beq}{\begin{equation}} \newcommand{\eeq}{\end{equation}}
\numberwithin{equation}{section}
\newcommand\br[1]{\left(#1\right)}
\def\vec#1{\mathchoice{\mbox{\boldmath$\displaystyle#1$}}
{\mbox{\boldmath$\textstyle#1$}}
{\mbox{\boldmath$\scriptstyle#1$}}
{\mbox{\boldmath$\scriptscriptstyle#1$}}}
\DeclareMathOperator{\pr}{\mathbb P}
\newcommand\SIGMA{\vec\sigma}
\newtheorem{definition}{Definition}[section]
\newtheorem{theorem}[definition]{Theorem}
\newtheorem{lemma}[definition]{Lemma}
\newtheorem{proposition}[definition]{Proposition}
\newtheorem{corollary}[definition]{Corollary}
\newtheorem{algorithm}[definition]{Algorithm}
\newtheorem{fact}[definition]{Fact}
\newcommand\rk{r_{k\mathrm{-SAT}}}
\newcommand\dist{\mbox{dist}}
\newcommand\suc{\mathrm{success}}
\newcommand\PHI{\vec\Phi}
\newcommand\cA{\mathcal{A}}
\newcommand\cC{\mathcal{C}}
\newcommand\cD{\mathcal{D}}
\newcommand\cG{\mathcal{G}}
\newcommand\cE{\mathcal{E}}
\newcommand\cU{\mathcal{U}}
\newcommand\cH{\mathcal{H}}
\newcommand\cM{\mathcal{M}}
\newcommand\cP{\mathcal{P}}
\def\cC{{\mathcal C}}
\def\cE{{\mathcal E}}
\newcommand\eul{\mathrm{e}}
\newcommand\Erw{\mathbb{E}}
\newcommand{\vecone}{\vec{1}}
\newcommand{\Bin}{{\rm Bin}}
\newcommand{\bink}[2] {{{#1}\choose {#2}}}
\newcommand\bc[1]{\left({#1}\right)}
\newcommand\cbc[1]{\left\{{#1}\right\}}
\newcommand\bcfr[2]{\bc{\frac{#1}{#2}}}
\newcommand\brk[1]{\left\lbrack{#1}\right\rbrack}
\newcommand\abs[1]{\left|{#1}\right|}
\newcommand{\whp}{w.h.p.}
\newcommand\vphi{{\vec{\varPhi}}}
\newcommand\Lem{Lemma}
\newcommand\Prop{Proposition}
\newcommand\Thm{Theorem}
\newcommand\Cor{Corollary}
\newcommand\Sec{Section}
\newcommand\algstyle{\small\sffamily}
\newcommand{\algn}[1]{\textnormal{\texttt{#1}}}
\newcommand{\walksat}{\algn{Walksat}}
\newcommand\KL[2]{D_{\mathrm{KL}}\bc{{{#1},{#2}}}}
\begin{document}

\title{
	Walksat stalls well below the satisfiability threshold}

\author[]{Amin Coja-Oghlan, Amir Haqshenas and Samuel Hetterich}
\thanks{$^\star$ The research leading to these results has received funding from the European Research Council under the European Union's Seventh Framework
			Programme (FP/2007-2013) / ERC Grant Agreement n.\ 278857--PTCC}

\address{Amin Coja-Oghlan, {\tt acoghlan@math.uni-frankfurt.de}, Goethe University, Mathematics Institute, 10 Robert Mayer St, Frankfurt 60325, Germany.}

\address{Amir Haqshenas, {\tt amirhaqshenas@googlemail.com}, Goethe University, Mathematics Institute, 10 Robert Mayer St, Frankfurt 60325, Germany.}

\address{Samuel Hetterich, {\tt hetterich@math.uni-frankfurt.de}, Goethe University, Mathematics Institute, 10 Robert Mayer St, Frankfurt 60325, Germany.}

\maketitle

\begin{abstract}
\noindent
Partly on the basis of heuristic arguments from physics it has been suggested that the performance of certain types of algorithms on random $k$-SAT formulas is linked
to phase transitions that affect the geometry of the set of satisfying assignments.
But beyond intuition there has been scant rigorous evidence that ``practical'' algorithms are affected by these phase transitions.
In this paper we prove that \walksat, a popular randomised satisfiability algorithm, fails on random $k$-SAT formulas not very far above clause/variable density where the set of satisfying assignments shatters into tiny, well-separated clusters.
Specifically, we prove \walksat\ is ineffective with high probability
if $m/n>c2^k\ln^2k/k$, where $m$ is the number of clauses, $n$ is the number of variables and $c>0$ is an absolute constant.
By comparison, \walksat\ is known to find satisfying assignments in linear time \whp\ if $m/n<c'2^k/k$ for another constant $c'>0$
	[Coja-Oghlan and Frieze, SIAM J.\ Computing 2014].

\noindent
\emph{Mathematics Subject Classification:} 68Q87 (primary), 68W40 (secondary)
\end{abstract}

\section{Introduction and results}

\subsection{Background and motivation.}
For integers $k\geq 3$ and $n,m>0$ let $\vphi=\vphi_k(n,m)=\vphi_1 \wedge \ldots \wedge \vphi_m$
	be a random Boolean formula in conjunctive normal form with clauses
	$\vphi_i=\vphi_{i1}\vee\ldots\vee \vphi_{ik}$ of length $k$ over the Boolean variables $x_1,\ldots,x_n$
	chosen uniformly at random from the set of all $(2n)^{km}$ possible such formulas.
Since the very beginning research on the random $k$-SAT problem has been driven by two hypotheses.
First, that for any $k\geq3$ there is a certain critical clause-to-variable density $\rk>0$, the {\em $k$-SAT threshold}, where the probability that the random formula
is satisfiable drops from almost $1$ to nearly $0$.
Second, that random formulas with a density close to but below $\rk$ are ``computationally difficult'' in some intuitive sense~\cite{Broder,Cheeseman, MitchellSelmanLevesque}.

While over the past 20 years there has been tremendous progress on the first hypothesis~\cite{nae,yuval,DSS,Kosta,HajiSorkin}, only few advances have been made on the second one.
One exciting insight came from physics~\cite{pnas,MPZ}.
Namely, according physics predictions, the geometry of the set of satisfying assignments $S(\PHI)$ undergoes a dramatic
change well below the satisfiability threshold.
Specifically, at a certain density $m/n$ the set $S(\PHI)$ breaks up into tiny, well-separated ``clusters'' \whp\
In fact, if we choose a satisfying assignment $\SIGMA$ uniformly at random, then \whp\ it will belong to a cluster with many ``frozen variables'', which
take the same truth value in {\em all} the satisfying assignments in that cluster.
Thus, the set $S(\PHI)$ is broadly resembles an error-correcting code, except that there is no simple underlying algebraic structure.
In effect, if, say, a local search algorithm attempts to find a satisfying assignment, it would apparently have to have the foresight to steer into one cluster
and get all its frozen variables right almost in one go.
This appears impossible without a survey of the ``global'' dependencies amongst the variables.

Many of the physics predictions on the geometry of the set $S(\PHI)$, including ``clustering'' and ``freezing'', have by now become rigorous theorems.
Moreover, the clause/variable density where clustering and freezing occur matches the density up to which algorithms are rigorously known to find
satisfying assignments, at least asymptotically for large enough clause lengths $k$.
To be precise, the $k$-SAT threshold is asymptotically equal to $m/n=2^k\ln 2-(1+\ln 2)/2+o_k(1)$,
	where $o_k(1)$ hides a term that tends to $0$ in the limit of large $k$~\cite{Kosta,DSS}.
By comparison, algorithms are known to find satisfying assignments up to $m/n=(1+o_k(1))2^k\ln k/k$~\cite{BetterAlg}.
Furthermore, for $m/n>(1+o_k(1))2^k\ln k/k$ clustering and freezing occur~\cite{Barriers, ACR, Molloy}.
Thus, one might expect that random formulas turn ``computationally difficult'' for densities almost a factor of $k$ below the $k$-SAT threshold.
Yet despite the structural results and the compelling intuitive picture drafted by the physics work, it has emerged to be remarkably difficult to actually
{\em prove} that these structural properties pose a barrier even for fairly simple satisfiability algorithms.

\subsection{The main result.}
In this paper we provide such a proof for \walksat, one of the simplest non-trivial satisfiability algorithms.
\walksat\ is a local search algorithm.
It starts with a uniformly random assignment.
So long as the current assignment fails to be satisfying, the algorithm chooses a random unsatisfied clause and flips the value assigned to a random variable in that clause.
That clause will thereby get satisfied, but other, previously satisfied clauses may become unsatisfied.
If after a certain given number $\omega$ of iterations no satisfying assignment is found, \walksat\ gives up.
Thus, the algorithm is one-sided: it may find a satisfying assignment but it cannot produce a certificate that a given formula is unsatisfiable.
The pseudocode is shown in Figure~\ref{Fig_Walksat};
for a formula $\Phi$ with $m$ clauses and $\sigma \in \Sigma$ we write $U_\Phi(\sigma)$ for the set of all indices $i\in[m]$ such that clause $\Phi_i$ is unsatisfied  under $\sigma$ and we let $\cU_\varPhi(\sigma)=|U_\varPhi(\sigma)|$ be the number of unsatisfied clauses.
\walksat\ is known to outperform exhaustive search by an exponential factor in the worst case and the procedure has been an ingredient
for some of the best algorithms for the $k$-SAT problem~\cite{DW,Hertli,HMS,HSSW,IT,PPSZ,Schoning}.

\begin{figure}
\begin{algorithm}
\walksat$(\Phi,\omega)$ \\
Input: \emph{A $k$-CNF $\Phi$ on $V$ and an integer $\omega>0$.} \\
Output: \emph{A truth assignment.}\\
\emph{\algstyle{
1.\qquad Choose an initial assignment $\sigma^{[0]}$ uniformly at random.\\
2.\qquad For $i=0, \ldots, \omega$ do\\
3.\qquad\qquad If $\sigma^{[i]}$ is a satisfying assignment output $\sigma^{[i]}$ and halt. \\
4.\qquad\qquad Choose $\varPhi_i\in U_\varPhi(\sigma^{[i]})$ and an integer from $1\leq j\leq k$ uniformly at random.\\
%5.\qquad\qquad Let $x$ be the underlying variable of the literal $\varPhi_{ij}$.\\
5.\qquad\qquad Obtain $\sigma^{[i+1]}$ from $\sigma^{[i]}$ by flipping the value of 
	the variable of the literal $\varPhi_{ij}$.\\
7.\qquad If $\sigma^{[\omega]}$ is a satisfying assignment output $\sigma^{[\omega]}$.
	Otherwise output `failure'.
}}
\end{algorithm}
\caption{The \walksat\ algorithm.}\label{Fig_Walksat}
\end{figure}

For a given formula $\Phi$ and $\omega>0$ we let $\suc(\varPhi,\omega)$ be the probability 
(over the random decisions of the algorithm only) that \walksat$(\Phi,\omega)$ will find a satisfying assignment.
Thus, $\suc(\vphi,\omega)$ is a random variable that depends on the random formula $\PHI$.

\begin{theorem} \label{theo_1}
There is exists a constant $c>0$ such that for all $k$ and all
 $m/n\geq c2^k\ln^2k/k$ \whp
 	$$\suc(\vphi,\lceil\exp( n/k^2)\rceil)\leq\exp(-n/k^2).$$
\end{theorem}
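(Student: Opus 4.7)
The plan is to upper-bound the expected success probability $\Erw[\suc(\vphi,T)]$, where $T=\lceil\exp(n/k^2)\rceil$, and then convert this into the claimed \whp\ statement via Markov's inequality. The bound will come from a union argument: sum over all candidate ``target'' clusters in $S(\vphi)$ the probability that Walksat's trajectory hits that cluster within $T$ steps.

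First I would invoke the clustering and freezing results that hold \whp\ at densities $m/n\geq c2^k\ln^2k/k$, well above the shattering threshold of order $2^k\ln k/k$. These guarantee that every $\sigma\in S(\vphi)$ sits inside a cluster $C(\sigma)$ with a frozen core $F(\sigma)\subseteq[n]$ of size $(1-\eps_k)n$ on which every member of $C(\sigma)$ agrees with $\sigma$, that distinct clusters are $\Omega(n)$-separated in Hamming distance, and that the total number of clusters is at most $\exp(O(n\ln k/k))$. Thus to reach any member of $C(\sigma^\star)$, Walksat must, at some time $t\leq T$, simultaneously set all $(1-\eps_k)n$ frozen coordinates of $\sigma^\star$ to their cluster values.

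The technical core is a single-cluster hitting-probability estimate. Fix $\sigma^\star\in S(\vphi)$ and let $d_t=|\{v\in F(\sigma^\star):\sigma^{[t]}(v)\neq\sigma^\star(v)\}|$, so $d_t$ changes by at most one per step. Because $\sigma^{[0]}$ is uniform, $d_0$ concentrates near $(1-\eps_k)n/2$. I would classify each clause according to how its literals align with $\sigma^\star$ on $F(\sigma^\star)$: for ``balanced'' clauses, the uniform choice of a literal among the $k$ positions yields nearly symmetric $\pm 1$ increments of $d_t$, whereas ``biased'' clauses, which I would show to be rare uniformly along the trajectory, contribute only $o(1/k)$ to the drift per step. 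A standard small-deviation bound for random walks then gives $\pr[\exists t\leq T:d_t=0]\leq\exp(-\Omega(n/k))$; combined with the union bound over at most $\exp(O(n\ln k/k))$ clusters, this yields $\Erw[\suc(\vphi,T)]\leq\exp(-2n/k^2)$ once $c$ is chosen large enough to absorb the $\ln^2k$ factor in the density.

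The main obstacle is the drift control. Walksat is strongly biased toward unsatisfied clauses, and unsatisfied clauses under $\sigma^{[t]}$ are naturally correlated with disagreement from $\sigma^\star$, which threatens to produce a systematic downward drift of $d_t$ toward zero. Ruling this out requires a careful moment computation on $\vphi$ showing that, after averaging over the random clauses incident to any specific frozen variable, the two literal signs appear with essentially equal frequency, so that no single coordinate gets pushed toward $\sigma^\star$ faster than a symmetric random walk would allow. A secondary difficulty is decoupling the three sources of randomness --- the formula $\vphi$, the initial assignment $\sigma^{[0]}$, and Walksat's internal coin flips --- which I would handle via a planted-model comparison together with Poissonisation of the clause distribution.
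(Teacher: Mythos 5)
Your overall architecture (bound the hitting probability of a target, union-bound over targets, concentrate via Markov) is the right shape, and the drift-control intuition for a single target is the same as the paper's Lemma~\ref{lem_drift}. But the union bound as you set it up does not close, and this is exactly the difficulty the paper's \emph{mist} construction is designed to overcome.

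Concretely: you bound the per-cluster hitting probability by $\exp(-\Omega(n/k))$ and then union-bound over ``at most $\exp(O(n\ln k/k))$ clusters.'' The product $\exp(O(n\ln k/k))\cdot\exp(-\Omega(n/k))=\exp\bigl(n(O(\ln k/k)-\Omega(1/k))\bigr)$ diverges for large $k$; increasing the density constant $c$ does not repair this, because $c$ does not alter either exponent's $k$-dependence in your sketch. Even if the per-cluster bound were sharpened to $\exp(-\Omega(n\ln k/k))$ (which is roughly what a careful drift analysis gives and what the paper's $\exp(-\kappa n/2)$ bound achieves), you would be pitting two exponents of the same order $n\ln k/k$ against each other, and you would need a rigorous and sufficiently small upper bound on the number of clusters, which is not available at these densities; the paper in fact explicitly cites physics evidence that there may be exponentially many unfrozen clusters and states that a straight union bound over targets ``is doomed.''

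The paper avoids this by never summing over all targets globally. It replaces clusters by the set $T(\vphi)$ of near-satisfying assignments (a much more tractable object than ``frozen clusters''), covers $T(\vphi)$ by a \emph{mist} $\cM$ of pairwise well-separated representatives, and proves the \emph{local sparsity} property \textbf{Q2}: for \emph{every} assignment $\tau$, at most $k$ mist points lie within distance $10\kappa n$ of $\tau$. In the proof of \Prop~\ref{prop_walk} the union bound is then taken over time pairs $(t_1,t_2)$ (only $\omega^2$ of them, which is absorbable) and, after conditioning on $\sigma^{[t_1]}$, over the at most $k$ mist points in its vicinity. That ``only $k$ lotteries at a time'' reduction is the missing ingredient in your proposal; without an analogue of \textbf{Q2}, the sum over targets is fatal. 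A secondary point: relying on frozen-cluster structure imports hypotheses that are both harder to verify quantitatively and unnecessary; the paper's conditions \textbf{Q1}--\textbf{Q3} are elementary first/second-moment statements about $\vphi$ and make the ``quasirandom formula $\Rightarrow$ deterministic failure of Walksat'' step clean, which also resolves the decoupling issue you flag at the end (the algorithmic analysis is performed on a fixed formula with property \textbf{Q1}--\textbf{Q3}, so the three sources of randomness never need to be disentangled jointly).
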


The random formula $\PHI$ is well-known to be unsatisfiable \whp\ if $m/n>2^k\ln2$.
Therefore, the condition $m/n>c2^k\ln^2k/k$ in \Thm~\ref{theo_1} implies a lower bound on  the clause length $k$ for which the statement is non-vacuous.
We have not tried to optimise the constant $c$.

The density required by \Thm~\ref{theo_1} exceeds the clustering/freezing threshold by a factor of $c\ln k$,
but still the $k$-SAT threshold is almost a factor of $k$ away.
Moreover, the theorem shows that \walksat\ fails in a dramatic way:
	on typical random formula $\PHI$ the success probability of \walksat\ is exponentially small, even if we run \walksat\ for an exponential number of rounds.
In particular, even if we restart \walksat\ any polynomial number of times from a new starting point the cumulative success probability of all trials
will remain exponentially small.

Why is it difficult to prove a result such as \Thm~\ref{theo_1} given what we know about freezing/clustering?
At the densities well below the $k$-SAT threshold like in \Thm~\ref{theo_1} we know that a uniformly {\em random} satisfying truth assignment
of the random formula $\PHI$ will lie in a ``frozen cluster'' \whp\
But there may very well exist unfrozen clusters;
	in fact, recent physics work suggests that there are exponentially many~\cite{BDSZ}.
Hence, because \walksat\ just aims to find a single satisfying assignment rather than to sample one uniformly at random,
the algorithm just needs to be lucky enough to find one weak, unfrozen spot, as it were.
In other words, we have to rule out the possibility that the algorithm somehow manages to home
	in on those spots where the ``barriers'' of the set $S(\PHI)$ are easily overcome.

But establishing such a statement is well beyond the standard arguments for analysing algorithms on random structures.
The main techniques such as the ``method of differential equations'' are suitable merely to trace algorithms for a small linear number of steps
and run into severe difficulties if the algorithm ever backtracks.
By construction, \walksat\ backtracks constantly (very likely many variables will likely be flipped more than once) and we actually need to follow the algorithm
for an {\em exponential} number of steps.
Hence, a different approach is needed.
\Sec~\ref{Sec_outline} provides a detailed outline of the proof of \Thm~\ref{theo_1}.

\subsection{Related work}
On the positive side, \walksat\ is known to find satisfying assignments for densities $m/n<2^k/(25k)$ for large enough $k$ in linear time~\cite{CF}.
Thus, the present paper matches the positve result up to a  $O_k(\ln^2k)$-factor.
Physics arguments suggest that \walksat\ should actually be effective up to $m/n=(1+o_k(1))2^k/k$~\cite{Monasson}, but not beyond.
Positive results for \walksat\ for small $k$ were obtained by Alekhnovich and Ben-Sasson~\cite{AB}.
{Additionally, they obtained exponential lower bounds for \walksat\ in the planted $3$-SAT problem for densities far above the satisfiabilty threshold, where in the planted model a random $3$-SAT formula is chosen conditioned on the existence of one solution~\cite{ABplanted}.}

Gamarnik and Sudan~\cite{GS2} obtained negative results for a class of algorithms that they call ``sequential local algorithms'' for the random $k$-NAESAT problem,
a cousin of random $k$-SAT.
Sequential local algorithms set the variables $x_1,\ldots,x_n$ of the random formula one by one in the natural order.
They do not backtrack.
The algorithm determines the value of variable $x_i$ based on the depth-$t$ neighborhood of $x_i$ in the hypergraph respresenting the formula.
To this end the algorithm takes into account the values assigned to those variables amongst $x_1,\ldots,x_{i-1}$ that occur in that part of the hypergraph.
The class of sequential local algorithms encompasses truncated version of message passing algorithms such as Belief Propagation Guided Decimation
and Survey Propagation Guided Decimation.
`Truncated' means that only a bounded number of parallel message updates are allowed; however, to reach an asymptotic fixed point of the messages it may be
necessary to update for $\Theta(\ln n)$ rounds.
The main result of~\cite{GS2} is that sequential local algorithms fail to find NAE-satisfying assignments for clause/variable densities above
$C2^k\ln^2k/k$ for a certain constant $C>0$. 

While \walksat\ is not a sequential local algorithm, we critically use one idea of the analysis from~\cite{GS2}, called ``overlap structures'' in that paper.
Specifically, Gamarnik and Sudan prove that for an appropriate integer $l$ no $l$-tuple of NAE-satsifying assignments exist with pairwise distance about $n\ln(k)/k$ if the clause/variable densities is above $C2^k\ln^2k/k$.
However, a coupling argument shows that if a local sequential algorithm were likely to succeed,
then there would have to be such an $l$-tuple of NAE-satisfying assignments with a non-vanishing probability.
%However, the symmetry property of the $k$-NAESAT problem is crucial to their prove and it does not apply to the $k$-SAT problem.}
Actually the idea of overlap structures originates from the work of Rahman and Virag~\cite{RV}, who improved the density of an earlier negative
result of Gamarnik and Sudan~\cite{GS1} for a more specialised class of algorithms for the independent set problem.
The definition of ``mists'' in the present paper is directly inspired by overlap structures. 

The first and the last author obtained negaitve results for message passing algorithms for random $k$-SAT that do not require bounds
on the number of iterations~\cite{BP,SP}.
Specifically, \cite{BP} shows that a basic version of Belief Propagation Guided Decimation fails to find satisfying assignments for densities
$m/n>C2^k/k$ for a certain constant $C>0$.
Moreover, \cite{SP} shows that a basic version of the conceptually more powerful Survey Propagation Guided Decimation algorithm
fails if $m/n>(1+o_k(1))2^k\ln k/k$.

Further negative results deal with DPLL-type algorithms.
In particular, Achlioptas, Beame and Molloy~\cite{ABM} proved that certain types of DPLL-algorithms fail for densities $m/n>C2^k/k$.
By comparison, unit clause propagation-type algorithms succeeds on random $k$-SAT formulas for $m/n<C'2^k/k$~\cite{ChaoFranco2,mick}.
Finally, the best current algorithm for random $k$-SAT succeeds for $m/n\leq(1+o_k(1))2^k\ln k/k$ but seems to fail beyond~\cite{BetterAlg}.

\subsection{Notation and preliminaries}
Throughout the paper we set $\rho=2^{-k}m/n$ and $\kappa=\ln k/k$.
We assume tacitly that $k,n$ are sufficiently large for our various estimates to hold.
Moreover, from here on out we always assume that $m/n>c2^k\ln^2k/k$.
(As mentioned above, the assumption that $k$ is large is justified because we assume that $m/n>c2^k\ln^2k/k$ and
the random formula is unsatisfied \whp\ if $m/n>2^k\ln2$.)

If $l$ is a literal, then we write $|l|$ for the underlying variable.
Thus, $|l|=x_i$ if $l=x_i$ or $l=\neg x_i$.
Moreover, the Hamming distance of two truth assignments $\sigma,\tau$ is denoted by $\dist(\sigma,\tau)$.
Additionally, for two truth assignments $\sigma,\tau:V\to\{0,1\}$ we let
	\begin{align}
	\Delta(\sigma,\tau)=\{x\in V: \sigma_1(x)\neq\tau(x)\}
	\end{align}
be the set of variables where $\sigma,\tau$ differ; hence, $|\Delta(\sigma,\tau)|=\dist(\sigma,\tau)$.
Further, for $\sigma\in\{0,1\}^n$ and $r_1,r_2\geq0$ define
\begin{align}
	\cD_\sigma(r_1,r_2)=\{\tau\in \Sigma: \lfloor r_1\kappa n \rfloor\leq\dist(\sigma,\tau)\leq \lfloor r_2\kappa n\rfloor\}.
\end{align} 
Hence, $\cD_\sigma(r_1,r_2)$ is a ring around $\sigma$ with inner radius $r_1\kappa n$ and outer radius $r_2\kappa n$.
Additionally, let $\cD_\sigma(r)=\cD_\sigma(r,r)$ be the set of assignments at distance exactly $r\kappa n$.

Recall that the {\em Kullback-Leibler divergence} of $p,q\in(0,1)$ is defined as
	$$\KL qp=q\ln\frac qp+(1-q)\ln\frac{1-q}{1-p}.$$
The following well-known lemma ``Chernoff bound'' states that the Kullback-Leibler divergence provides the rate function of the binomially distribution
(e.g., \cite[p.~21]{JLR}). 

\begin{lemma}\label{lem_chernoff}
Let $p,q\in(0,1)$ be distinct and let $X_n=\Bin(n,p)$.
Then
\begin{align*}
	\lim_{n\to\infty}\frac1n\ln\pr\brk{X\leq qn}&=-\KL{q}{p}&\mbox{if $q<p$},\\
	\lim_{n\to\infty}\frac1n\ln\pr\brk{X\geq qn}&=-\KL{q}{p}&\mbox{if $q>p$}.
\end{align*}
\end{lemma}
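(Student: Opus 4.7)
The plan is to establish the lemma as a textbook Cram\'er-type large deviation principle by giving matching upper and lower exponential bounds. By the symmetry $X \mapsto n - X$, $p \mapsto 1-p$, the case $q > p$ reduces to the case $q < p$, so I will focus on the latter throughout.

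First, for the upper bound I would apply the exponential Markov inequality. For any $t > 0$, since $X$ is a sum of $n$ i.i.d.\ $\Be(p)$ variables,
$$\pr[X \leq qn] = \pr[e^{-tX} \geq e^{-tqn}] \leq e^{tqn}\EX[e^{-tX}] = e^{tqn}\bc{1-p+pe^{-t}}^n.$$
Taking $n^{-1}\ln(\cdot)$ and minimizing over $t > 0$, the optimal tilt is $e^{-t} = q(1-p)/((1-q)p)$, which is strictly less than $1$ precisely because $q < p$, so that the minimizer $t>0$ is admissible. Substituting this value and simplifying yields exactly $-\KL{q}{p}$, giving $\frac{1}{n}\ln\pr[X \leq qn] \leq -\KL{q}{p}$.

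Second, for the matching lower bound I would use the trivial one-term bound $\pr[X \leq qn] \geq \pr[X = \lfloor qn \rfloor]$ and control the individual probability by Stirling's formula. With $k = \lfloor qn \rfloor$, grouping the $p$- and $(1-p)$-factors against the binary entropy contribution from the binomial coefficient gives
$$\binom{n}{k} p^k (1-p)^{n-k} = \frac{1+o(1)}{\sqrt{2\pi n q(1-q)}}\exp\bc{-n\KL{q}{p}},$$
and hence $\frac{1}{n}\ln\pr[X \leq qn] \geq -\KL{q}{p} + o(1)$. Combining the two bounds and letting $n \to \infty$ gives the claimed limit.

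There is no real conceptual obstacle here: the lemma is a standard result, and indeed the authors merely cite~\cite{JLR}. The only bookkeeping I would be careful about is (i) rounding, handled by the continuity of $q \mapsto \KL{q}{p}$ on $(0,1)$; (ii) the hypothesis $q \neq p$, which is exactly what guarantees that the optimal tilt is strictly positive (and symmetrically strictly negative when $q > p$), so that the exponential rate is nontrivial; and (iii) the fact that the polynomial prefactor from Stirling is absorbed into $o(n)$ after taking $n^{-1}\ln$ and sending $n \to \infty$.
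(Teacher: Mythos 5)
Your proof is correct. The paper does not actually prove Lemma~\ref{lem_chernoff} --- it simply cites \cite[p.~21]{JLR}, as you observe --- and your argument (exponential Markov for the upper bound, a one-term Stirling estimate for the lower bound, and the $X\mapsto n-X$ symmetry for $q>p$) is the standard Cram\'er-type derivation of exactly this statement, with the bookkeeping points (rounding, strict positivity of the optimal tilt, polynomial prefactor absorbed after taking $n^{-1}\ln$) handled correctly.
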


\noindent
We are going to need the following ``random walk'' version of \Lem~\ref{lem_chernoff}.

\begin{corollary}\label{cor_walk}
Suppose that $(W_n)_{n\geq1}$ is a sequence of independent random variables such that $0<\pr[W_n=1]=1-\pr[W_n=-1]=p<1/2$.
Let $q>0$.
Then
\begin{align*}
\lim_{n\to\infty}\frac1n\ln\pr\brk{\sum_{t=1}^nW_n\geq qn}&=-\KL{(1+q)/2}{p}.
\end{align*}
\end{corollary}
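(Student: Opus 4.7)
The plan is to reduce the random walk statement directly to the binomial Chernoff bound of Lemma~\ref{lem_chernoff}. Write $S_n=\sum_{t=1}^n W_t$ and let $X_n$ denote the number of indices $t\in[n]$ with $W_t=1$, so that $X_n\sim\Bin(n,p)$ and $S_n=2X_n-n$. Consequently, for any $q\in\RR$,
\begin{align*}
\set{S_n\geq qn}=\set{X_n\geq (1+q)n/2}.
\end{align*}

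Set $q'=(1+q)/2$. The assumptions $q>0$ and $p<1/2$ yield $q'>1/2>p$, so Lemma~\ref{lem_chernoff} applies in its ``$q'>p$'' form and gives
\begin{align*}
\lim_{n\to\infty}\frac1n\ln\pr\brk{X_n\geq q'n}=-\KL{q'}{p}=-\KL{(1+q)/2}{p},
\end{align*}
which is the claimed identity. The only mild care needed is the boundary regime: if $q\geq 1$ then $q'\geq 1$, the event $\set{S_n\geq qn}$ has probability zero for $q>1$ (both sides of the asserted equality are $-\infty$, interpreting $\KL{1}{p}=-\ln p$ and $\KL{q'}{p}=+\infty$ for $q'>1$), and for $q<1$ we have $q'\in(p,1)$ so the lemma applies verbatim.

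I do not anticipate a genuine obstacle here: the corollary is purely a change of variables from the $\pm1$-walk to the $\{0,1\}$-counting variable, and all of the work is carried by Lemma~\ref{lem_chernoff}. The only thing worth flagging in the write-up is that the hypothesis $p<1/2$ is used solely to ensure $q'>p$ so that the correct tail form of the Chernoff bound is invoked.
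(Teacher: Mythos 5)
Your proof is correct and takes exactly the same route as the paper's: rewrite the $\pm 1$-walk in terms of the $\{0,1\}$-counting variable $X_n\sim\Bin(n,p)$ via $S_n=2X_n-n$, translate the tail event, and invoke Lemma~\ref{lem_chernoff} in its upper-tail form using $p<1/2<(1+q)/2$. The extra paragraph on the boundary case $q\geq 1$ is a reasonable (if slightly pedantic) addition that the paper omits.
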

\begin{proof}
Let $X_t=(1+W_t)/2$ for all $t\geq1$.
Then $S_n=\sum_{t=1}^nX_t$ is a binomial random variable with parameters $n$ and $p$ and
$\sum_{t=1}^nW_t=2(\sum_{t=1}^nX_t)-n$.
Hence, $\sum_{t=1}^nW_t\geq qn$ iff $\sum_{t=1}^nX_t\geq n(1+q)/2$ and the assertion follows from \Lem~\ref{lem_chernoff}.
\end{proof}

\section{Outline}\label{Sec_outline}

\noindent
The classical worst-case analysis of \walksat\ goes as follows.
Suppose that $\Phi$ is a satisfiable $k$-SAT formula on $n$ variables and fix a satisfying assignment $\tau$.
At any step the algorithm flips a randomly chosen variable in an unsatisfied clause.
Because $\tau$ must satisfy that clause, there is at least a $1/k$ chance that the algorithm moves toward $\tau$.
Hence, in the case $k=2$ the distance evolves at least as good as in an unbiased random walk, and thus we expect to reach $\tau$
or another satisfying assignment in $O(n^2)$ steps~\cite{Papadimitriou}.
By contrast, for $k\geq3$ the corresponding random walk has a drift away from $\tau$ and the probability of reaching $\tau$
in polyonmial time from a random starting point is exponentially small.
Yet calculating the probability of starting at distance a bit less than $n/2$ from $\tau$ and then dashing towards it
reveals that \walksat\ beats the naive $2^n$ exhaustive search algorithm~\cite{Schoning}.

Of course, on a random formula this analysis is far from tight.
For example, for $m/n$ below the satisfiability threshold the number $|S(\PHI)|$ of satisfying assignments is typically exponential in $n$.
In fact, \whp\ we have $\ln|S(\PHI)|=n\ln2+\frac mn\ln(1-(1+o_k(1))2^{-k})$~\cite{ACR}.
Hence, if $m/n=O_k(2^k\ln^2k/k)$, then \whp\ the number of satisfying assignments is as large as $$|S(\PHI)|=2^{n(1-O_k(\ln^2k/k))}.$$
This observation obliterates some obvious proof ideas, such as combining the random walk argument from the previous paragraph
with some sort of a union bound on the number of satisfying assignments;
there is just too many of them.%
	\footnote{The second author's master thesis contained an argument based on combining the random walk analysis with a union bound.
		However, that argument requires that $m/n=(1+o_k(1))2^k\ln 2$, a much stronger assumption than that of \Thm~\ref{theo_1}.}

Another type of approach that seems doomed is meticilously tracing every step of the \walksat\ algorithm.
This is basically what the proof of the positive \walksat\ result from~\cite{CF} does.
Such analyses typically depend on the principle of deferred decisions, i.e., the idea that the parts of the formula that the algorithm has not inspected yet
are ``random'', subject to some relatively weak conditioning.
This kind of approach can follow an algorithm for a small linear number of steps.
But here we are trying to prove a statement about an exponential number of iterations.
By that time the algorithm will likely have visited every clause of the formula several times over and thus there is ``no randomness left''.
Hence, we need a different approach.

Our strategy is to split the analysis in two parts.
First, we are going to formulate a few {\em quasirandom properties}.
We will show that \walksat\ is exponential on {\em any} given formula that has these properties.
Second, we will prove that the random formula has these quasirandom properties \whp\
A similar type of argument was used, e.g., in prior work on message passing algorithms~\cite{BP, SP}.

The key is to come up with the right quasirandom properties.
To this end, we need to develop an intuition as to what \walksat\ actually does on a random input $\PHI$.
Because \walksat\ starts from a random assignment, initially there will be about $2^{-k}m=\rho n$ unsatisfied clauses.
In fact, we can establish a stronger, more geometric statement.
Let $T(\Phi)$ be the set of all truth assignments $\tau \in\{0,1\}^n$ such that $\cU_{\PHI}(\tau)\leq n\rho/10$
	(i.e., the number of violated clauses is a tenth of what we expect in a random assignment).
Set $\kappa=\ln k/k$.
Then a union bound shows that the initial assignment $\sigma^{[0]}$ will most likely be at distance at least $10\kappa n$ from all $\tau\in T(\PHI)\supset S(\PHI)$.

The second observation is that \walksat\ will likely have a hard time entering the set $T(\PHI)$.
%The reason for this is, in a way, a generalised clustering property.
Intuitively, for $m/n>(1+o_k(1))2^k\ln k/k$ it is not just the set $S(\PHI)$ that shatters into tiny well-separated clusters, but even the set $T(\PHI)$
has this property.
Moreover, the no man's land between different clusters provides no clues that nudge \walksat\ towards any one of them.
In fact, there is a repulsion effect.
To be precise, consider a ``target assignment'' $\tau\in T(\PHI)$ and suppose that $\sigma\in\{0,1\}^n\setminus T(\PHI)$
has distance at most $100\kappa n$ from $\tau$.
Because $\sigma\not\in T(\PHI)$, the assignment leaves at least $n\rho/10$ clauses unsatisfied.
Let us pretend that these unsatisfied clauses are random.
Then if we pick a variable in an unsatisfied clause randomly, the probability of hitting a variable in $\Delta(\sigma,\tau)$ is as small as $100\kappa<0.1$
	(for large enough $k$).
Hence, there is a 90\%\ chance that \walksat\ will move away from $\tau$, deeper into no man's land.
Thus, to reach a satisfying assignment or, in fact any assignment in $T(\PHI)$ \walksat\ would have to beat the odds and overcome a substantial negative drift, which is
exponentially unlikely.

However, there is one point that we missed.
Although the probability of walking towards one satisfying assignment at distance at most $100\kappa n$ from the present assignment may be small,
the total number of satisfying assignments is enormous and \walksat\ just has to find any one of them.
In other words, at any step \walksat\ may be taking part in an exponential number of ``lotteries''.
While any one of them may be rigged against the algorithm, the sheer number of simultaneous lotteries may yet give the algorithm a chance to succeed in polynomial time.

To rule this possibility out we introduce the concept of a {\em mist}, which is an adaptation of the ``overlap structures'' from~\cite{GS2}.
More precisely,  we will argue that we do not need to track the distance between \walksat's current assignment and the entire set $T(\PHI)$
but merely the distance to a much smaller set $\cM$ of assignments.
This subset is ``sparse'' in the sense that for any truth assignment $\sigma$ the number of assignments in $\cM$ at distance at most $10\kappa n$ from
$\sigma$ is bounded by $k$ rather than exponential in $n$.
We will use this fact to argue that at any time the algorithm only takes part in at most $k$ lotteries rather than an exponential number.
This will enable us to prove that reaching $T(\PHI)$ will most likely take an exponential amount of time.

Formally, let $\Phi$ be a $k$-CNF on the variable set $x_1,\ldots,x_n$.

A \emph{mist} of $\Phi$ is a set $\cM\subset T(\Phi)$ of assignments with the following two properties.
\begin{description}
\item [MI1] the assignments in $\cM$ have pairwise distance at least $2\kappa n$.
\item [MI2] for each $\sigma\in T(\Phi)$ there exists $\mu\in\cM$ such that $\dist(\mu,\sigma)\leq 2\kappa n$.
\end{description}

Thus, the points of the mist are spread out but there is one near every assignment in $T(\Phi)$.
Let
\begin{align*}
	\cD(\varPhi,\cM)=\bigcup_{\sigma\in\cM} \cD_{\sigma}(0,10)
\end{align*} 
be the set of all assignments at distance at most $10\kappa n$ from $\cM$.
Moreover, for a truth assignment $\sigma$ and a set $W\subset\{x_1,\ldots,x_n\}$  let
	\begin{align}\label{eqX}
	X_\varPhi(W,\sigma)=\sum_{i\in U_\Phi(\sigma)}\sum_{j\in[k]}\vecone\{|\Phi_{ij}|\in W\}
	\end{align}
be the number of occurrences of variables from $W$ in the unsatisfied clauses $U_\varPhi(\sigma)$.
Further, call $\Phi$ {\em quasirandom} if there is a mist $\cM$ such that the following three statements hold.

\begin{description}
 \item[\textbf{Q1}] 
 		we have $|\cD(\varPhi,\cM)|\leq 2^n\exp(-2n/k^2)$.
 \item[\textbf{Q2}] for any $\tau \in \{0,1\}^n$ we have $|\cM\cap\cD_\tau(0,10)|\leq k$.
 \item[\textbf{Q3}] for every $\mu\in\cM$ in the mist and each $\sigma \in \cD_{\mu}(0,100)\setminus T(\varPhi)$ we have
 	$X_\varPhi(\Delta(\mu,\sigma))\leq  k \cU_\varPhi(\sigma)/10$.
\end{description}

\noindent
Thus, the set $\cD(\varPhi,\cM)$ is small and thus it is exponentially unlikely for the initial random $\sigma^{[0]}$ to belong to this set.
Moreover, there are no more than $k$ elements of the mist $\cM$ in the vicinity of any one assignment $\tau$.
Finally, {\bf Q3} says that if $\tau\not\in T(\Phi)$ is an assignment with many unsatisfied clauses at distance no more than $100\kappa n$ from $\mu\in\cM$, then the probability that \walksat\ takes a step from $\tau$ towards $\mu$ does not exceed $10\%$.
Indeed, $X_\varPhi(\Delta(\mu,\tau))$ is the number of flips that take \walksat\ closer to $\mu$, and $ k \cU_\varPhi(\tau)$
is the total number of  possible flips.

Now, proving \Thm~\ref{theo_1} comes down to establishing the following two statements.

\begin{proposition}\label{prop_walk}
If $\Phi$ is quasirandom, then $\suc(\Phi,\lceil\exp( n/k^2)\rceil)]\leq \exp(-n/k^2)$.
\end{proposition}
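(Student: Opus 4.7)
The plan is to show that, conditional on the initial random assignment $\sigma^{[0]}$ being at distance more than $10\kappa n$ from every mist point, the algorithm's trajectory almost surely stays more than $2\kappa n$ away from every mist point for $T := \lceil \exp(n/k^2)\rceil$ steps. By \textbf{MI2} this is enough, since every element of $T(\varPhi)$---and a fortiori every satisfying assignment---lies within $2\kappa n$ of some $\mu \in \cM$. The initial condition itself is immediate: by \textbf{Q1} the uniform $\sigma^{[0]}$ fails this with probability at most $\exp(-2n/k^2)$.

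The main device is the potential
\[
\Psi(\sigma) \;=\; \sum_{\mu \in \cM\,:\,\dist(\sigma,\mu) \leq 10\kappa n} 9^{10\kappa n - \dist(\sigma,\mu)}.
\]
After conditioning on the good starting event, $\Psi(\sigma^{[0]}) = 0$, while if $\sigma \in T(\varPhi)$ then \textbf{MI2} forces $\Psi(\sigma) \geq 9^{8\kappa n}$. Letting $\tau^\ast = \min\{t : \sigma^{[t]} \in T(\varPhi)\}$, the goal is to bound $\pr[\tau^\ast \leq T]$.

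The heart of the argument is the drift estimate
\[
\Erw\bigl[\Psi(\sigma^{[t+1]}) - \Psi(\sigma^{[t]}) \,\big|\, \cF_t\bigr] \;\leq\; k
\qquad\text{for all } t < \tau^\ast.
\]
Because a Walksat step flips a uniformly random variable-occurrence in an unsatisfied clause, \textbf{Q3} combined with the definition~\eqref{eqX} yields $\pr[\dist(\sigma^{[t+1]},\mu) = \dist(\sigma^{[t]},\mu) - 1 \mid \cF_t] \leq 1/10$ for every $\mu$ within distance $100\kappa n$ of $\sigma^{[t]}$. The base~$9$ is chosen so that $\tfrac{1}{10}\cdot 9 + \tfrac{9}{10}\cdot\tfrac{1}{9} = 1$; consequently each $\mu$ currently inside the radius-$10\kappa n$ ball contributes a non-positive expected change (with a strictly negative change at the exit boundary $d = 10\kappa n$). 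The only positive contribution is from ``new entries''---mist points transitioning from $\dist(\sigma^{[t]},\mu) = 10\kappa n + 1$ to $\dist(\sigma^{[t+1]},\mu) = 10\kappa n$---each worth $9^0 = 1$; applying \textbf{Q2} to $\sigma^{[t+1]}$ deterministically caps the number of new entries at $k$ per step.

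Given this drift bound, the stopped process $\Psi(\sigma^{[t \wedge \tau^\ast]}) - (t \wedge \tau^\ast)\,k$ is a nonnegative supermartingale starting at $0$, so optional stopping yields $\Erw[\Psi(\sigma^{[T \wedge \tau^\ast]})] \leq Tk$. Since $\{\tau^\ast \leq T\} \subseteq \{\Psi(\sigma^{[T \wedge \tau^\ast]}) \geq 9^{8\kappa n}\}$, Markov's inequality gives
\[
\pr[\tau^\ast \leq T] \;\leq\; \frac{Tk}{9^{8\kappa n}} \;=\; k\exp\!\bigl(\tfrac{n}{k^2} - 8\ln 9 \cdot \tfrac{n\ln k}{k}\bigr) \;=\; \exp\!\bigl(-\Omega\!\bigl(\tfrac{n\ln k}{k}\bigr)\bigr),
\]
which is far smaller than $\exp(-n/k^2)$ for $k$ large; adding the $\exp(-2n/k^2)$ from the starting event gives the desired conclusion. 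The main technical obstacle is the drift bound: \textbf{Q2} is essential to cap the ``new entries'' contribution by $k$ per step rather than by an a priori unknown, possibly exponential, number of mist points, and the base $9$ must exactly match the critical slope $1/10$ from \textbf{Q3} so that each interior term of $\Psi$ is at worst a martingale---any smaller base leaves positive drift from the many interior terms, while a larger base would make the $9^0$-weighted new entries too expensive to absorb.
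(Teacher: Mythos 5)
Your proof is correct, and it is a genuinely different argument from the one in the paper. The paper decomposes the ``walk enters a cluster'' event as a union over $(\mu,t_1,t_2)$ of hitting events $H_\mu(t_1,t_2)$ defined in~\eqref{eqH}, bounds each individual event by a one-dimensional random-walk/Chernoff argument (Lemma~\ref{lem_drift} via Corollary~\ref{cor_walk}), and only at the very end invokes \textbf{Q2} to reduce the sum over $\mu$ to $k$ non-trivial terms. You instead build a single exponentially weighted Lyapunov function
$$\Psi(\sigma)=\sum_{\mu\in\cM:\,\dist(\sigma,\mu)\le 10\kappa n} 9^{\,10\kappa n-\dist(\sigma,\mu)},$$
use \textbf{Q3} to show each interior $\mu$-term is (at worst) a martingale (the base $9$ exactly saturating the $1/10$ drift bound), use \textbf{Q2} to cap the positive drift from boundary entries at $k$ per step, and finish by optional stopping and Markov. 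Both arguments rest on precisely the same properties --- \textbf{Q1} for the start, \textbf{Q3} for the per-step drift, \textbf{Q2} to control the number of simultaneous ``lotteries'', and \textbf{MI2} to reduce hitting $S(\Phi)$ to approaching $\cM$ --- but your supermartingale packaging avoids the explicit union over the $O(\omega^2)$ time pairs $(t_1,t_2)$ and subsumes Fact~\ref{lem_hjii}, the random-walk lemma, and the union bound into one optional-stopping step.

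Two small remarks. First, the process $\Psi(\sigma^{[t\wedge\tau^\ast]})-(t\wedge\tau^\ast)k$ is not nonnegative as you claim (it can drop below zero), but this is harmless: optional stopping at the bounded deterministic time $T$ needs only the supermartingale property, which you have established. Second, a careful write-up should check that the interior-term computation applies at every distance $1\le d\le\lfloor 10\kappa n\rfloor$ (the case $d=0$ cannot occur before $\tau^\ast$ since $\cM\subset T(\Phi)$), and that \textbf{Q3} is applicable whenever $\dist(\sigma^{[t]},\mu)\le\lfloor 10\kappa n\rfloor+1\le\lfloor 100\kappa n\rfloor$, which holds for all $\mu$ that contribute either a drift term or a new entry.
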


\begin{proposition}\label{prop_quasi}
If $m/n\geq 195\cdot 2^k\ln^2k/k$, then $\vphi$ is quasirandom \whp
\end{proposition}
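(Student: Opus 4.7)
My plan is to construct the mist $\cM$ greedily: starting from any $\mu_0\in T(\varPhi)$, iteratively add to $\cM$ any $\mu\in T(\varPhi)$ at Hamming distance at least $2\kappa n$ from every previously chosen mist point, halting when no such $\mu$ remains. Then MI1 holds by construction and MI2 by maximality, so it suffices to verify Q1--Q3 hold \whp\ for this $\cM$.

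For Q1 the idea is a first-moment bound on $|T(\varPhi)|$ combined with a ball-size estimate. For any fixed $\sigma$, $\cU_\varPhi(\sigma)\sim\Bin(m,2^{-k})$ has mean $n\rho$, so by \Lem~\ref{lem_chernoff} $\pr[\sigma\in T(\varPhi)]\leq\exp(-c_1 n\rho)$ for some absolute $c_1>0$. Hence $\EX|T(\varPhi)|\leq 2^n\exp(-c_1n\rho)$, and since $\cM\subseteq T(\varPhi)$ and $|\cD_\mu(0,10)|\leq 2\binom{n}{\lfloor 10\kappa n\rfloor}\leq\exp(O(n\ln^2k/k))$, we obtain $\EX|\cD(\varPhi,\cM)|\leq 2^n\exp(-c_1n\rho+O(n\ln^2k/k))$; Markov transfers this to a \whp\ bound. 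The hypothesis $n\rho\geq 195n\ln^2k/k$ then makes the exponent at most $-2n/k^2$ for large $k$.

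The heart of the argument is Q2. I would bound the expected number of $(k+1)$-tuples $(\mu_0,\ldots,\mu_k)\in T(\varPhi)^{k+1}$ with pairwise Hamming distance in $[2\kappa n,20\kappa n]$. The pivotal observation is that the pairwise separation makes violation events on a common clause almost disjoint: for a uniformly random $k$-clause and any $\mu_i,\mu_{i'}$ with $\dist(\mu_i,\mu_{i'})\geq 2\kappa n$, $\pr[\text{both violate it}]=2^{-k}(1-\dist(\mu_i,\mu_{i'})/n)^k\leq 2^{-k}(1-2\kappa)^k\leq 2^{-k}/k^2$. Inclusion--exclusion then yields $\pr[\exists i:\mu_i\text{ violates the clause}]\geq(k+1)2^{-k}-\binom{k+1}{2}2^{-k}/k^2\geq k\cdot 2^{-k}/2$ for $k$ large. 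Setting $Z_j=\vecone\{\exists i:\mu_i\text{ violates clause }j\}$, the $Z_j$ are i.i.d.\ Bernoullis and $\sum_jZ_j$ stochastically dominates $\Bin(m,k2^{-k}/2)$ with mean $kn\rho/2$. If every $\mu_i\in T(\varPhi)$, then $\sum_jZ_j\leq\sum_i\cU_\varPhi(\mu_i)\leq(k+1)n\rho/10$, a constant factor below the mean, so \Lem~\ref{lem_chernoff} produces probability $\exp(-c_2kn\rho)$. The number of geometric configurations is at most $2^n\binom{n}{\lfloor 20\kappa n\rfloor}^k\leq\exp(n\ln 2+O(n\ln^2k))$, and $c_2kn\rho\geq 195c_2n\ln^2k$ dominates for appropriate constants. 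The main obstacle I foresee is exactly this sharpening: without the inclusion--exclusion upgrade from $2^{-k}$ to $\Omega(k2^{-k})$, the Chernoff bound only yields $\exp(-\Omega(n\rho))$, which cannot defeat the $\exp(\Omega(n\ln^2k))$ growth of the $k$-fold geometric count.

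For Q3 a union bound over pairs $(\sigma,W)$ with $\sigma\in\{0,1\}^n$ and $|W|\leq\lfloor 100\kappa n\rfloor$ does the job. There are at most $2^n\binom{n}{\lfloor 100\kappa n\rfloor}\leq\exp(n\ln 2+O(n\ln^2k/k))$ such pairs. For a fixed pair, condition on $\cU_\varPhi(\sigma)=u$: given which clauses are unsatisfied, each of the $ku$ literal positions within them is an independent uniform variable over $\{x_1,\ldots,x_n\}$, so $X_\varPhi(W,\sigma)\sim\Bin(ku,|W|/n)$. Since $|W|/n\leq 100\ln k/k\ll 1/10$ for large $k$, the threshold $ku/10$ sits well above the conditional mean and \Lem~\ref{lem_chernoff} gives $\pr[X_\varPhi(W,\sigma)\geq ku/10\mid\cU_\varPhi(\sigma)=u]\leq\exp(-c_3ku\ln k)$. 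Summing over $u\geq n\rho/10$ yields $\exp(-\Omega(kn\rho\ln k))=\exp(-\Omega(n\ln^3k))$ per pair, which comfortably beats the union bound.
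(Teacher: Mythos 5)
Your decomposition (greedy mist construction, then separate first-moment arguments for Q1, Q2, Q3) is exactly the paper's, and your treatments of Q1 and Q3 match the paper essentially line for line. The interesting divergence is in Q2, where you take a genuinely different and, in my view, cleaner route.

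The paper handles Q2 by first passing to a Poissonized model $\PHI'$ (each clause present independently with probability $q=m/(2n)^k$), then sequentially peeling off disjoint clause sets $C(\sigma_i\mid\sigma_1,\ldots,\sigma_{i-1})=C(\sigma_i)\setminus\bigcup_{j<i}C(\sigma_j)$ and bounding $\pr[\sigma_i\in T(\PHI')\mid\sigma_1,\ldots,\sigma_{i-1}\in T(\PHI')]$ via the observation that the pairwise $2\kappa n$-separation forces $|C(\sigma_i)\cap C(\sigma_j)|\leq k^{-2}n^k$. The Poissonization is there precisely so that the clause counts on these disjoint sets are independent and the sequential conditioning is legitimate. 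You sidestep both the Poissonization and the conditioning: you look directly at the indicator $Z_j$ that \emph{some} $\mu_i$ violates clause $j$, obtain the lower bound $\pr[Z_j=1]\geq k2^{-k}/2$ from inclusion--exclusion (using the same $|C\cap C'|\leq k^{-2}n^k$ estimate the paper uses), observe that the $Z_j$ are i.i.d.\ in the fixed-$m$ model because $\vphi_1,\ldots,\vphi_m$ are i.i.d., and then hit the single sum $\sum_j Z_j$ with one Chernoff bound against the pointwise inequality $\sum_j Z_j\leq\sum_i\cU_\vphi(\mu_i)\leq(k+1)\rho n/10$. This eliminates an entire technical layer (the model comparison estimate (\ref{eqeq})), and the constants come out comfortably: you get a Chernoff exponent of roughly $0.24\,k\rho n\geq 46\,n\ln^2k$ against a configuration count of roughly $\exp(20\,n\ln^2k)$ (the radius $20\kappa n$ you use is twice the paper's $10\kappa n$, but the stronger rate more than compensates). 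Both arguments exploit the same key geometric fact; yours packages it via a union event rather than sequential disjointification. The proof is correct.
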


\noindent
We prove \Prop~\ref{prop_walk} in \Sec~\ref{Sec_walk} and \Prop~\ref{prop_quasi} in  \Sec~\ref{sec_quasi}.
\Thm~\ref{theo_1} is immediate from \Prop s~\ref{prop_walk} and~\ref{prop_quasi}.

\section{Proof of \Prop~\ref{prop_walk}}\label{Sec_walk}

\noindent
Suppose that $\Phi=\Phi_1\wedge\cdots\wedge\Phi_m$ is a quasirandom $k$-CNF on the variables $x_1,\ldots,x_n$.
Let $\cM$ be a mist such that {\bf Q1--Q3} hold and set $\omega=\lceil\exp(n/k^2)\rceil$.
Condition {\bf Q1} provides that the event $\cA=\{\sigma^{[0]} \notin \cD(\varPhi)\}$ has probability
	\begin{align}\label{eqPrA}
	\pr\brk\cA&\geq1-\exp(-2n/k^2).
	\end{align}
In the following we may therefore condition on $\cA$.

The key object of the proof is the following family of events: for $\mu\in\cM$ and $1\leq t_1< t_2\leq \omega$ let
	\begin{equation}\label{eqH}
	H_\mu(t_1,t_2)=\cbc{\dist(\sigma^{[t_1]},\mu)=\lfloor 10\kappa n\rfloor,\dist(\sigma^{[t_2]},\mu)=\lfloor5\kappa n\rfloor,
		\forall t_1\leq t\leq t_2:\sigma^{[t]} \in \cD_{\mu}(5,10)\setminus T(\varPhi)}.
	\end{equation}
In words, $H_\mu(t_1,t_2)$ is the event that at time $t_1$ \walksat\ stands at distance precisely $\lfloor 10\kappa n\rfloor$ from $\mu$,
 that the algorithm advances to distance $\lfloor5\kappa n\rfloor$ at time $t_2$ while not treading closer to $\mu$ but staying in $\cD_{\mu}(5,10)$ at any intermediate step, and that
\walksat\ does not hit $T(\varPhi)$ at any intermediate step.
Let
	$$\cH=\bigcup_{\mu\in\cM,0\leq t_1<t_2\leq\omega}H_\mu(t_1,t_2).$$

\begin{fact}\label{lem_hjii}
We have
%\begin{align}
	$\pr\brk{\exists t\leq\omega:\sigma^{[t]}\in S(\varPhi)|\cA}
		\textstyle\leq\pr\brk{\cH|\cA}$.
%\end{align}
\end{fact}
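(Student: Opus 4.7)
The plan is to establish the pointwise inclusion of events
\[
\cA \cap \{\exists t \leq \omega: \sigma^{[t]} \in S(\varPhi)\} \subseteq \cA \cap \cH,
\]
from which the conditional probability bound follows immediately by monotonicity. The argument will be entirely deterministic once the right point of the mist is selected, and essentially reduces to a discrete intermediate-value argument applied to the nearest-neighbour walk $t \mapsto \dist(\sigma^{[t]}, \mu)$.

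First I would use MI2 together with the inclusion $S(\varPhi) \subseteq T(\varPhi)$ to pin down the relevant mist point. Let $\tau^{*} = \min\{t \leq \omega : \sigma^{[t]} \in T(\varPhi)\}$; this is well-defined whenever a satisfying $\sigma^{[t]}$ exists for some $t\leq\omega$. By MI2 I then pick $\mu^{*} \in \cM$ with $\dist(\sigma^{[\tau^{*}]}, \mu^{*}) \leq \lfloor 2\kappa n\rfloor$. Conditioning on $\cA$ forces $\dist(\sigma^{[0]}, \mu^{*}) > \lfloor 10\kappa n\rfloor$, since $\sigma^{[0]} \notin \cD(\varPhi,\cM) \supseteq \cD_{\mu^{*}}(0,10)$.

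Next, because \walksat\ flips exactly one variable per iteration, the integer-valued function $d(t) := \dist(\sigma^{[t]}, \mu^{*})$ changes by $\pm 1$ per step. Since $d(0) > \lfloor 10\kappa n\rfloor$ and $d(\tau^{*}) \leq \lfloor 2\kappa n\rfloor < \lfloor 5\kappa n\rfloor$, the discrete intermediate-value property lets me set
\[
t_{1} := \max\{t < \tau^{*} : d(t) = \lfloor 10\kappa n\rfloor\}, \qquad t_{2} := \min\{t > t_{1} : d(t) = \lfloor 5\kappa n\rfloor\},
\]
both of which are well-defined integers satisfying $t_{1} < t_{2} < \tau^{*} \leq \omega$.

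Finally, I would verify the three requirements in the definition of $H_{\mu^{*}}(t_{1}, t_{2})$. The boundary values $d(t_{1}) = \lfloor 10\kappa n\rfloor$ and $d(t_{2}) = \lfloor 5\kappa n\rfloor$ hold by construction. Maximality of $t_{1}$ forces $d(t) < \lfloor 10\kappa n\rfloor$ for $t_{1} < t \leq \tau^{*}$ (otherwise the walk would have to pass through $\lfloor 10\kappa n\rfloor$ again on its way down to $\lfloor 2\kappa n\rfloor$, contradicting the choice of $t_{1}$), while minimality of $t_{2}$ forces $d(t) > \lfloor 5\kappa n\rfloor$ for $t_{1} < t < t_{2}$; together these give $\sigma^{[t]} \in \cD_{\mu^{*}}(5,10)$ throughout $[t_{1}, t_{2}]$. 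The inequality $t_{2} < \tau^{*}$ combined with the minimality of $\tau^{*}$ as the first hitting time of $T(\varPhi)$ gives $\sigma^{[t]} \notin T(\varPhi)$ on $[t_{1}, t_{2}]$. Thus $H_{\mu^{*}}(t_{1}, t_{2})$ holds, and a fortiori $\cH$ holds. There is no substantial obstacle: the only care required is bookkeeping around the floor functions and the correct strict/non-strict inequalities at the endpoints $t_{1}, t_{2}$.
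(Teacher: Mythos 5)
Your proof is correct and follows the same approach as the paper's: locate the first hitting time $\tau^{*}$ of $T(\varPhi)$, pick a nearby mist point $\mu^{*}$ via \textbf{MI2}, use $\cA$ (together with $\cD_{\mu^{*}}(0,10)\subseteq\cD(\varPhi,\cM)$) to conclude the walk starts at distance $>\lfloor 10\kappa n\rfloor$ from $\mu^{*}$, and exhibit a window $[t_1,t_2]$ on which $H_{\mu^{*}}(t_1,t_2)$ holds via the discrete intermediate-value property of the unit-step walk $t\mapsto\dist(\sigma^{[t]},\mu^{*})$. Your choice of $t_1$ as the \emph{last} time before $\tau^{*}$ at which the distance equals $\lfloor 10\kappa n\rfloor$ is in fact a touch more careful than the paper's ``first entry'' choice, since it is what guarantees the walk does not overshoot $\lfloor 10\kappa n\rfloor$ again inside the window and hence genuinely stays in $\cD_{\mu^{*}}(5,10)$ throughout $[t_1,t_2]$.
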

\begin{proof}
Recall that $S(\varPhi)\subset T(\varPhi)$.
Suppose that $\sigma^{[t]}\in S(\varPhi)$ for some $t\leq\omega$; then the algorithm halts at time $t$.
Let $t_0< t$ be minimum such that $\sigma^{[t_0]}\in T(\Phi)$.
Then there exists $\mu\in\cM$ such that $\dist(\mu,\sigma^{[t_0]})<2\kappa n$.
Further, given $\cA$ we have $\dist(\sigma^{[0]},\mu)>10\kappa n$.
Hence, for some $0<t_1<t_0$ the event $\dist(\sigma^{[t_1]},\mu)\leq 10\kappa n$ occurs for the first time.
Moreover, there exists a minimum $t_2$ such that $t_1<t_2<t_0$ and $\dist(\sigma^{[t_2]},\mu)\leq 5\kappa n$.
Since \walksat\ moves Hamming distance one in each step, $H_\mu(t_1,t_2)$ occurs.
\end{proof}

\noindent
To show that $\cH$ is exponentially unlikely we are first going to estimate the probability of a single event $H_{\mu}(t_1,t_2)$.

\begin{lemma}\label{lem_drift}
Let $\tau_1 \notin T(\varPhi)$ and $\mu\in\cM$ be such that $\dist(\tau_1,\mu)=\lfloor10\kappa n\rfloor$.
Then
\begin{align*}
	\pr\brk{H_\mu(t_1,t_2)|\cA, \sigma^{[t_1]}=\tau_1} \leq \exp(-\kappa n/2)\quad\mbox{for all $1\leq t_1\leq t_2\leq \omega$}.
\end{align*} 
\end{lemma}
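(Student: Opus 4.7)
The plan is to reduce the statement to a biased random-walk tail estimate enabled by hypothesis \textbf{Q3}. On the event $H_\mu(t_1,t_2)$ every intermediate configuration $\sigma^{[t]}$ with $t_1\leq t<t_2$ lies in $\cD_\mu(5,10)\setminus T(\varPhi)\subseteq \cD_\mu(0,100)\setminus T(\varPhi)$, so \textbf{Q3} yields $X_\varPhi(\Delta(\mu,\sigma^{[t]}),\sigma^{[t]})\leq k\,\cU_\varPhi(\sigma^{[t]})/10$. Since \walksat\ picks uniformly among the $k\,\cU_\varPhi(\sigma^{[t]})$ variable-occurrences in unsatisfied clauses, and a step reduces $\dist(\sigma^{[t]},\mu)$ by one precisely when the flipped variable lies in $\Delta(\mu,\sigma^{[t]})$, this translates into the key bound: conditioned on the history, the probability that the $t$-th step moves toward $\mu$ is at most $1/10$.

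Next, let $W_t\in\{-1,+1\}$ indicate whether step $t$ moves toward (value $-1$) or away from $\mu$, and set $n'=t_2-t_1$ together with $s=\lfloor 10\kappa n\rfloor-\lfloor 5\kappa n\rfloor\geq 4\kappa n$; on $H_\mu(t_1,t_2)$ one has $\sum_{t=t_1}^{t_2-1}W_t=-s$. The $1/10$ bias is only guaranteed while the walk stays in $\cD_\mu(5,10)\setminus T(\varPhi)$, so to sidestep conditioning on $H_\mu(t_1,t_2)$ I would define an auxiliary adapted sequence $(\tilde W_t)$: set $\tilde W_t=W_t$ when $\sigma^{[t]}\in\cD_\mu(5,10)\setminus T(\varPhi)$, and otherwise draw $\tilde W_t$ independently with $\pr[\tilde W_t=-1]=1/10$. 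Then $\pr[\tilde W_t=-1\mid\mathcal F_{t-1}]\leq 1/10$ unconditionally, while on $H_\mu(t_1,t_2)$ the two sequences agree, so a routine coupling gives
\begin{equation*}
\pr[H_\mu(t_1,t_2)\mid \cA,\sigma^{[t_1]}=\tau_1]\leq \pr\bigl[\Bin(n',1/10)\geq (n'+s)/2\bigr].
\end{equation*}

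The final step is the Chernoff bound of \Lem~\ref{lem_chernoff} (equivalently Corollary~\ref{cor_walk}), yielding an upper bound of $\exp\bigl(-n'\,\KL{(1+s/n')/2}{1/10}\bigr)$. Since $\KL{\cdot}{1/10}$ is increasing on $(1/10,1)$ and $(1+s/n')/2\geq 1/2$, I would replace the KL quantity by the uniform lower bound $\KL{1/2}{1/10}=\tfrac12\ln(25/9)>1/2$; combined with $n'\geq s\geq 4\kappa n$ this gives $n'\,\KL{(1+s/n')/2}{1/10}\geq n'/2\geq 2\kappa n$, comfortably beating the target $\kappa n/2$.

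I expect no serious obstacle: the entire content of the lemma is that \textbf{Q3} certifies a uniform $1/10$ bias per step toward $\mu$ whenever the algorithm is in the annulus outside $T(\varPhi)$, after which a standard Chernoff estimate for biased walks finishes the job. The only mildly delicate point is the stochastic-domination coupling used to escape conditioning on $H_\mu(t_1,t_2)$ when invoking the Chernoff bound, which is routine.
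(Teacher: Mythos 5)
Your argument is correct and matches the paper's proof in all essentials: both use \textbf{Q3} to certify that every step taken from a configuration in $\cD_\mu(5,10)\setminus T(\varPhi)$ decreases $\dist(\,\cdot\,,\mu)$ with probability at most $1/10$, dominate the distance process by an i.i.d.\ $\pm1$ walk with bias $1/10$, and conclude via Lemma~\ref{lem_chernoff}/Corollary~\ref{cor_walk} using $t_2-t_1\geq 5\kappa n-1$. The only difference is cosmetic bookkeeping for steps outside the annulus: the paper's variable $Y_{t+1}$ in (\ref{eqY}) adds $+2$ to the increment when $\sigma^{[t]}\notin\cD_\mu(5,10)\setminus T(\varPhi)$, whereas you resample the increment independently there; both implement the same stochastic-domination coupling.
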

\begin{proof}
For an index $t_1< t\leq t_2$ 
define 
	\begin{align}\label{eqY}
	Y_{t+1}=\dist(\sigma^{[t+1]},\mu)-\dist(\sigma^{[t]},\mu)+2\cdot\vecone\{\sigma^{[t]}\not\in \cD_{\mu}(5,10)\setminus T(\Phi)\}.
	\end{align}
If the event $H_\mu(t_1,t_2)$ occurs, then $\sigma^{[t]}\in \cD_{\mu}(5,10)\setminus T(\varPhi)$ for all $t_1\leq t\leq t_2$ and
	$\sum_{t_1\leq t< t_2}Y_{t+1}\leq1-5\kappa n$.
Moreover, we claim that 
	\begin{align}\label{eqLem_Drift1}
	\Erw[Y_{t+1}-Y_t|\sigma^{[t]}\not\in T(\Phi)]&\geq 4/5.
	\end{align}
Indeed, at time $t+1$ \walksat\ chooses an unsatisfied clause and then a variable from that clause uniformly at random.
If $Y_{t+1}<Y_t$, then the chosen variable is from the set $\Delta(\mu,\sigma^{[t]})$ of variables where $\sigma^{[t]}$ and $\mu$ differ.
By (\ref{eqX}) the probability of this event equals $X_\Phi(W,\sigma^{[t]})/k\cU_\Phi(\sigma^{[t]})$.
Hence,  {\bf Q3} shows that the probability that $\dist(\sigma^{[t+1]},\mu)<\dist(\sigma^{[t]},\mu)$ is bounded by $0.1$, unless 
$\sigma^{[t]}\not\in \cD_{\mu}(5,10)\setminus T(\Phi)$.
Consequently, (\ref{eqLem_Drift1}) follows from the definition (\ref{eqY}).

If we let $(W_t)_{t\geq1}$ be a sequence of independent $\pm1$-random variables such that
$\pr[W_t=-1]=0.1$ and $\pr[W_t=1]=0.9$, then (\ref{eqLem_Drift1}) implies
	\begin{align*}
	\pr\brk{H_\mu(t_1,t_2)|\cA, \sigma^{[t_1]}=\tau_1}&\leq
		\pr\brk{\sum_{t_1\leq t< t_2}Y_{t+1}\leq1-5n\ln k/k}\leq
		\pr\brk{\sum_{t_1\leq t< t_2}W_{t}\leq1-5n\ln k/k}.
	\end{align*}
Thus, the assertion follows from \Cor~\ref{cor_walk} and the fact that $H_\mu(t_1,t_2)$ can occur only if $t_2-t_1\geq5\kappa n$,
because \walksat\ moves Hamming distance one in each step.
\end{proof}

\begin{proof}[Proof of \Prop~\ref{prop_walk}]
By Lemma \ref{lem_drift} each of the events contributing to $\cH$ occurs only with probability at most $\exp(-\kappa n/2)$ given $\cA$.
But since the number of assignments in the mist $\cM$ and hence the number of individual events $H_\mu(t_1,t_2)$
may be much larger than $\exp(n\kappa/2)$, a simple union bound on $\mu\in\cM$ won't do.
Indeed, the real problem here is the size of the mist and not the number of possible choices of $t_1,t_2$,
because $t_1,t_2\leq\omega$ and $\omega$ is (exponential but) relatively small.
In other words, we do not give away too much by writing
\begin{align}
\pr\brk{\cH|\cA}&
	\leq \sum_{0\leq t_1< t_2\leq\omega} \pr\brk{\textstyle\bigcup_{\mu\in\cM} H_\mu(t_1,t_2)\bigg|\cA}\nonumber\\
&=\sum_{0\leq t_1< t_2\leq\omega} \sum_{\sigma\in \Sigma}\pr\brk{\textstyle\bigcup_{\mu\in\cM} H_\mu(t_1,t_2)|\cA,\sigma^{[t_1]}=\sigma}
	\pr\brk{\sigma^{[t_1]}=\sigma|\cA}\nonumber\\%\label{theo_equ_7}.
	&\leq \sum_{0\leq t_1< t_2\leq\omega} \max_{\sigma\in \Sigma}\pr\brk{\textstyle\bigcup_{\mu\in\cM} H_\mu(t_1,t_2)|\cA,
		\sigma^{[t_1]}=\sigma}\nonumber\\
&\leq \sum_{0\leq t_1< t_2\leq\omega} \max_{\sigma\in \Sigma}\sum_{\mu\in\cM}\pr\brk{ H_\mu(t_1,t_2)|\cA,\sigma^{[t_1]}=\sigma}.\label{theo_equ_2}
\end{align}
To bound the last term, we recall from (\ref{eqH}) that
	$\pr\brk{ H_\mu(t_1,t_2)|\cA,\sigma^{[t_1]}=\sigma}=0$ unless $\dist(\mu,\sigma)=\lfloor10\kappa n\rfloor$.
Hence, \textbf{Q2} implies that for any $\sigma\in\Sigma$ the sum on $\mu$ in (\ref{theo_equ_2}) has at most $k$ non-zero summands.
Therefore, Lemma \ref{lem_drift} gives
\begin{align}\label{theo_equ_1}
	\max_{\sigma\in \Sigma}\sum_{\mu\in\cM}\pr\brk{ H_\mu(t_1,t_2)|\cA,\sigma^{[t_1]}=\sigma}\leq k\exp(-n\kappa/2).
\end{align}
Plugging (\ref{theo_equ_2}) into (\ref{theo_equ_1}) and recalling the choice of $\omega$, we get
\begin{align}
\pr\brk{\cH|\cA}
&\leq \omega^2k\exp(-\kappa n/2)
\leq \exp(-n/k^2),\label{theo_equ_4}
\end{align}
with room to spare.
Finally, the assertion follows from (\ref{eqPrA}), Fact~\ref{lem_hjii} and (\ref{theo_equ_4}).
\end{proof}

\section{Proof of Proposition \ref{prop_quasi}}\label{sec_quasi}

\noindent
We begin with the following standard `first moment' bound.

\begin{lemma}\label{lem_exp_T}
We have $\Erw\abs{T(\vphi)}\leq 2^n\exp\br{-\rho n/2)}$.
\end{lemma}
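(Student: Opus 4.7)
The plan is the standard first moment calculation combined with a Chernoff bound. By linearity,
\[
\Erw|T(\vphi)| = \sum_{\tau \in \{0,1\}^n} \pr[\tau \in T(\vphi)] = 2^n \cdot \pr[\tau_0 \in T(\vphi)]
\]
for any fixed $\tau_0 \in \{0,1\}^n$, since the distribution of $\vphi$ is invariant under flipping literals. So the whole task is to bound $\pr[\tau_0 \in T(\vphi)]$ by $\exp(-\rho n/2)$.

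For a fixed $\tau_0$, the number $\cU_\vphi(\tau_0)$ of clauses unsatisfied by $\tau_0$ is distributed as $\Bin(m, 2^{-k})$: the $m$ clauses are drawn independently, and a uniformly random $k$-clause is falsified by $\tau_0$ with probability exactly $2^{-k}$ (the clause must consist of the $k$ literals negating $\tau_0$ on its chosen variables). The event $\{\tau_0 \in T(\vphi)\}$ is the event $\{\Bin(m, 2^{-k}) \le n\rho/10\}$; since $n\rho = m \cdot 2^{-k}$, this is a large deviation below the mean, by a factor of $10$. Applying Lemma~\ref{lem_chernoff} (with $p = 2^{-k}$ and $q = 2^{-k}/10$),
\[
\pr[\tau_0 \in T(\vphi)] \le \exp\!\bigl(-m \cdot \KL{2^{-k}/10}{2^{-k}}\bigr).
\]

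Now I would estimate the KL divergence. Writing $p = 2^{-k}$ and $\alpha = 1/10$, a Taylor expansion $\ln(1-x) = -x + O(x^2)$ gives
\[
\KL{\alpha p}{p} = \alpha p \ln \alpha + (1-\alpha p)\ln\frac{1-\alpha p}{1-p} = p\bigl[\alpha \ln \alpha + (1-\alpha)\bigr] + O(p^2).
\]
Since $\alpha \ln \alpha + (1-\alpha) = -\tfrac{1}{10}\ln 10 + \tfrac{9}{10} > 0.66$, for $k$ large enough $\KL{2^{-k}/10}{2^{-k}} \ge 2^{-k}/2$. Therefore
\[
\pr[\tau_0 \in T(\vphi)] \le \exp(-m \cdot 2^{-k}/2) = \exp(-\rho n/2),
\]
and summing over the $2^n$ choices of $\tau_0$ proves the claim.

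There is no real obstacle: the only thing to check is the constant in the KL estimate, which is mechanical and uses nothing beyond $p = 2^{-k} \to 0$ as $k$ is large (an assumption the paper already makes globally). The same bound could alternatively be obtained directly from the binomial probability generating function or a Stirling calculation on $\sum_{i \le n\rho/10}\binom{m}{i}2^{-ki}(1-2^{-k})^{m-i}$, but invoking Lemma~\ref{lem_chernoff} is cleaner.
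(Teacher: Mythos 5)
Your proof is correct and follows essentially the same route as the paper: linearity of expectation, the observation that $\cU_{\vphi}(\tau_0)\sim\Bin(m,2^{-k})$, and the Chernoff/Kullback--Leibler bound; you merely spell out the Taylor expansion of $\KL{2^{-k}/10}{2^{-k}}$ that the paper leaves implicit in the phrase ``and our assumption on $m/n$.'' The only nit is that Lemma~\ref{lem_chernoff} as stated is a limit rather than a non-asymptotic inequality, but the standard Chernoff bound does give $\pr[\Bin(m,p)\le qm]\le\exp(-m\KL qp)$ outright, and the paper itself absorbs the slack into an $o(n)$ term, so this makes no substantive difference.
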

\begin{proof}
For any fixed assignment $\sigma\in\{0,1\}^n$ the number $\cU_{\PHI}(\sigma)$ of unsatisfied clauses has distribution $\Bin(m,2^{-k})$.
Therefore, by \Lem~\ref{lem_chernoff} and our assumption on $m/n$,  
	\begin{align*}
	\pr\brk{\sigma\in T(\vphi)}&=\exp(-m\KL{0.1\cdot 2^{-k}}{2^{-k}}+o(n))\leq\exp(-\rho n/2).
	\end{align*}
Thus, the assertion follows from the linearity of expectation.
\end{proof}

To proceed, we construct a mist $\cM$ of the random formula $\vphi$ by means of the following iterative procedure.
\begin{enumerate}
\item Initially let $\cM=\emptyset$.
\item While $T(\vphi)\setminus\bigcup_{\mu\in\cM}\cD_{\mu}(0,2)\neq\emptyset$, add an arbitary element of this set to $\cM$.
\end{enumerate}
Let us fix any possible outcome $\cM$ of the above process.
Of course, $\cM$ depends on $\PHI$ but we do not make this explicit to unclutter the notation.
We now simply verify the conditions {\bf Q1--Q3} one by one.

\begin{lemma}\label{lem_Q1}
\textbf{Q1} holds with probability $1-\exp(-\Omega(n))$
\end{lemma}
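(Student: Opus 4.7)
The plan is to combine a first-moment bound on $|T(\vphi)|$ (supplied by Lemma~\ref{lem_exp_T}) with a standard ball-size estimate. The starting observation is that every point of $\cM$ belongs to $T(\vphi)$ by construction, and therefore
\[
|\cD(\vphi,\cM)|=\Big|\bigcup_{\mu\in\cM}\cD_\mu(0,10)\Big|\leq |T(\vphi)|\cdot\max_{\sigma\in\{0,1\}^n}|\cD_\sigma(0,10)|.
\]
So the proof reduces to (a) getting a high-probability upper bound on $|T(\vphi)|$ and (b) bounding a Hamming ball of radius $\lfloor 10\kappa n\rfloor$.

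For (a) I would simply apply Markov's inequality to Lemma~\ref{lem_exp_T}: since $\Erw|T(\vphi)|\leq 2^n\exp(-\rho n/2)$, we have
\[
\pr\brk{|T(\vphi)|>2^n\exp(-\rho n/3)}\leq \exp(-\rho n/6)=\exp(-\Omega(n)),
\]
because $\rho\geq 195\ln^2 k/k$ is bounded below by a positive constant for our regime of $k$. For (b) the usual binomial-entropy estimate gives
\[
|\cD_\sigma(0,10)|=\sum_{i=0}^{\lfloor 10\kappa n\rfloor}\binom{n}{i}\leq\exp\bc{n H(10\kappa)},\qquad H(\alpha)=-\alpha\ln\alpha-(1-\alpha)\ln(1-\alpha),
\]
and with $\kappa=\ln k/k$ we have $H(10\kappa)\leq 10\kappa\ln(1/10\kappa)+10\kappa=O((\ln k)^2/k)$.

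Putting the pieces together, on the event that the Markov bound on $|T(\vphi)|$ holds,
\[
|\cD(\vphi,\cM)|\leq 2^n\exp\bc{-\rho n/3+O(n(\ln k)^2/k)}.
\]
Because the hypothesis $m/n\geq 195\cdot 2^k\ln^2 k/k$ gives $\rho/3\geq 65\ln^2 k/k$, the $\rho/3$ term dominates the $O((\ln k)^2/k)$ ball-entropy contribution by a wide margin, and in particular the exponent is at most $-2n/k^2$ for $k$ sufficiently large. Hence \textbf{Q1} is satisfied with probability $1-\exp(-\Omega(n))$.

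The one step that needs any care is the book-keeping in the final inequality: one must verify that the constant $195$ (after division by $3$ from Markov, and after swallowing the implicit constant in $H(10\kappa)\ln 2\leq c_0(\ln k)^2/k$) still leaves a positive margin strictly larger than $2/k^2$. This is routine but is the reason the paper picked the specific constant in Proposition~\ref{prop_quasi}, and it is the only place where the exact value of $c$ in the hypothesis enters. Everything else — the covering $|\cM|\leq|T(\vphi)|$, Markov, and the entropy bound on Hamming balls — is entirely standard.
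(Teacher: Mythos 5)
Your proof is correct and follows essentially the same route as the paper: bound $|\cD(\vphi,\cM)|$ by $|T(\vphi)|$ times the volume of a Hamming ball of radius $10\kappa n$ (using $\cM\subset T(\vphi)$), invoke Lemma~\ref{lem_exp_T} and Markov, and observe that the $\rho$ term (with $\rho\geq195\ln^2 k/k$) dominates the $O(n\ln^2 k/k)$ ball-entropy contribution. The only cosmetic differences are that you state the Markov step explicitly on $|T(\vphi)|$ rather than on $\Erw|\cD(\vphi,\cM)|$ and you write the ball bound via the entropy function, neither of which changes the substance.
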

\begin{proof}
We start with a naive bound on the number of assignments in $\cD_\sigma(0,10)$ centered at an arbitrary $\sigma \in \Sigma$.
Stirling's formula shows that for any fixed assignment $\sigma\in\{0,1\}^n$,
\begin{align*}
	|\cD_\sigma(0,10)|\leq\sum_{j\leq 10\kappa n}\bink{n}{j}\leq n\exp(10n\ln^2 k/k).
\end{align*} 
Hence, the construction of $\cM$ ensures that
	$|\cD(\vphi,\cM)|\leq |T(\vphi)|\cdot n\exp(10n\ln^2k/k).$
Thus,
\begin{align*}
	\Erw\brk{|\cD(\vphi)|} &\leq \Erw\brk{|T(\vphi)|}\cdot n\exp(10n\ln^2(k)/k).
\end{align*}
Consequently, the assertion follows from \Lem~\ref{lem_exp_T} and our assumption on $\rho$.
\end{proof}

For an assignment $\sigma\in \Sigma$ let $\cC(\sigma)$  be the set of all possible unsatisfied clauses under $\sigma$ on the variable set $x_1,\ldots,x_n$.
Then $|\cC(\sigma)|=n^k$ for all $\sigma\in\Sigma$.

The following Lemma proving that with high probability \textbf{Q2} holds in $\PHI$ is similar to the statement in~\cite{GS2} that certain ``overlap structures'' do not exist (where an ``overlap structure'' is an $l$-tuple of NAE-satisfying assignments with pairwise distance $\sim\kappa n$ for an appropriate integer $l$.)
This concept is an adaption of a bound on intersection densities for tuples of independent sets in sparse $d$-regular graphs from~\cite{RV}.
There it is shown that no tuple of large local independent sets intersecting each other in a certain way exists in a $d$-regular graph \whp\
We are going to prove a similar statement, namely that no $m$-tuple of assignments with a small number of unsatisfied clauses that have pairwise distance $\sim \kappa$ and are all contained in $\cD_\tau(0,10)$ for some $\tau\in\Sigma$ exist.
Following~\cite{GS2} we also use an inclusion/exclusion estimate, while here of course we are not focussing on satisfying assignments but on assignments with   a relatively small number of unsatisfied clauses.

\begin{lemma}\label{lem_Q2}
\textbf{Q2} holds \whp
\end{lemma}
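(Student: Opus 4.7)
The plan is a first-moment bound on the number of ``bad'' configurations of mist elements. If \textbf{Q2} fails, then by the mist construction (specifically \textbf{MI1}) there exist $l:=k+1$ assignments $\mu_1,\ldots,\mu_l\in T(\vphi)$ that are pairwise at Hamming distance $\geq 2\kappa n$ and all lie in $\cD_\tau(0,10)$ for some witness $\tau$; the triangle inequality then gives $\dist(\mu_i,\mu_j)\leq 20\kappa n$. So it suffices to show that the expected number of $l$-tuples in $T(\vphi)$ with pairwise distances in $[2\kappa n,20\kappa n]$ is $o(1)$, and then invoke Markov.

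The expected number factorises as (number of geometric configurations) $\times$ (probability that a fixed tuple lies in $T(\vphi)$). For the former, choosing $\mu_1$ freely costs a factor $2^n$, while each further $\mu_i$ lies in $\cD_{\mu_1}(0,20)$, a set of size at most $\exp(20n\ln^2k/k\cdot(1+o_k(1)))$ by the same entropy estimate used in \Lem~\ref{lem_Q1}. So the geometric count is at most $2^n\exp(20(l-1)n\ln^2k/k\cdot(1+o_k(1)))$.

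For the probability I would run inclusion–exclusion over the clauses in the spirit of~\cite{GS2}. Let $Y_c=\vecone\{\text{some }\mu_i\text{ violates }\vphi_c\}$ and $Y=\sum_c Y_c$. Because the clauses are i.i.d.\ uniform, $Y\sim\Bin(m,q_l)$ with
\[
  q_l\;\geq\;l\cdot 2^{-k}\;-\;\binom{l}{2}\cdot 2^{-k}(1-2\kappa)^k\;\geq\;\tfrac{l}{2}\cdot 2^{-k},
\]
using $(1-2\kappa)^k\leq 1/k^2$ and $l\leq k^2$ in the last step. The pairwise correction $((n-\dist(\mu_i,\mu_j))/(2n))^k$ appears because a clause is violated by both $\mu_i$ and $\mu_j$ only if each of its $k$ variables lies in their agreement set (of size $n-\dist(\mu_i,\mu_j)$) and the signs are then forced, contributing the factor $2^{-k}$. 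Now if all $\mu_i\in T(\vphi)$ then $Y\leq ln\rho/10\leq\tfrac15\Erw[Y]$, so \Lem~\ref{lem_chernoff} gives $\pr[\mu_1,\ldots,\mu_l\in T(\vphi)]\leq\exp(-\Omega(ln\rho))$.

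Multiplying the two bounds, the expected number of bad tuples is at most $2^n\exp\bc{20ln\ln^2k/k\cdot(1+o_k(1))-\Omega(ln\rho)}$. For $l=k+1$ and $\rho\geq 195\ln^2k/k$ the negative $-\Omega(kn\rho)\sim-\Omega(n\ln^2 k)$ comfortably dominates both the entropy term $n\ln 2$ and the geometric count $O(n\ln^2k)$, yielding $\exp(-\Omega(n\ln^2 k))=o(1)$. The main obstacle I foresee is precisely this constant-chasing: one must keep the inclusion–exclusion correction tight enough that $q_l\geq\Theta(l\cdot 2^{-k})$ survives at $l=k+1$, and then engineer the Chernoff constant so that it beats the $20\ln^2k/k$ coming from the size of the radius-$20\kappa n$ ball. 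This is exactly what forces $\rho$ to be of order $\ln^2k/k$ rather than merely $\ln k/k$, and explains the specific constant $195$ in \Prop~\ref{prop_quasi}.
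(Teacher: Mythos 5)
Your argument is correct, and it follows the same high-level plan as the paper (a first-moment bound over tuples of elements of $T(\vphi)$ that are simultaneously close together and pairwise $2\kappa n$-separated, combined with an inclusion/exclusion estimate to handle the correlations), but the technical execution of the probability estimate is genuinely different and, in fact, a little cleaner. The paper bounds $\pr\brk{\sigma_1,\ldots,\sigma_k\in T}$ by sequentially conditioning on $\sigma_1,\ldots,\sigma_{i-1}\in T$ and counting clauses of $C(\sigma_i)$ not already accounted for; making this conditioning work requires passing to an auxiliary Poissonised model $\PHI'$ (where each clause is present independently) and then transferring back via the coupling inequality $\pr[\PHI\in\cE]\leq O(\sqrt n)\pr[\PHI'\in\cE]+o(1)$. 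You instead observe that the single union count $Y=\#\{c\in[m]:\text{some }\mu_i\text{ violates }\vphi_c\}$ is exactly $\Bin(m,q_l)$ in the original model $\PHI$ because the clauses are i.i.d., that Bonferroni with the pairwise‐intersection term $2^{-k}(1-2\kappa)^k\leq 2^{-k}k^{-2}$ gives $q_l\geq l\cdot2^{-k}/2$ for $l\leq k^2$, and that $\mu_1,\ldots,\mu_l\in T(\vphi)$ forces $Y\leq l\rho n/10\leq\Erw[Y]/5$, after which a single Chernoff lower‐tail bound finishes the probability estimate. This sidesteps the Poissonisation and the transfer lemma entirely. Two further small differences: you use $l=k+1$ (the direct negation of \textbf{Q2}) where the paper uses $l=k$ and thus proves the marginally stronger bound $|\cM\cap\cD_\tau(0,10)|\leq k-1$; and you anchor the configuration count at $\mu_1$ and bound the others by $\cD_{\mu_1}(0,20)$, which doubles the entropy exponent relative to anchoring at $\tau$ and using $\cD_\tau(0,10)$, but the constant $195$ leaves ample slack for this. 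Both versions are sound; yours avoids one piece of model‐coupling machinery at the cost of a slightly worse (but irrelevant) entropy constant.
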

\begin{proof}
We prove the statement by way of a slightly different random formula model $\PHI'$.
In $\PHI'$ each of the $(2n)^k$ possible clauses is included with probability $q=m/(2n)^k$ independently in a random order.
A standard argument shows that this model is essentially equivalent to $\PHI$.
To be precise, we claim that for any event $\cE$ we have
	\begin{align}\label{eqeq}
	\pr\brk{\PHI\in\cE}&\leq O(\sqrt n)\pr\brk{\PHI'\in\cE}+o(1).
	\end{align}
To see this, let $\cG$ be the event that $\PHI$ does not contain the same $k$-clause twice, i.e., $\PHI_i\neq\PHI_j$ for all $1\leq i<j\leq m$.
A simple union bound shows that $\pr\brk{\PHI\in\cG}=1-O(1/n)$.
Moreover, let $\vec m'$ be the total number of clauses of $\PHI'$.
The $\vec m'$ is a binomial variable with mean $m$ and Stirling's formula shows that $\pr\brk{\vec m'=m}=\Theta(n^{-1/2})$.
Thus,  (\ref{eqeq}) follows from the observation that the distribution of $\PHI'$ given $\vec m'=m$ coincides with the distribution of $\PHI$ given $\cG$.

Hence, we are going to work with the model $\PHI'$.
Let $\cM'$ be the mist constructed for $\PHI'$ by means of our above procedure.
Moreover, for $\tau\in \{0,1\}^n$  let $P(\tau)$ be the set of all $k$-tuples $(\sigma_i)_{i\in[k]}$ with the following two properties.
\begin{description}
\item[\textbf{P1}] $\sigma_i \in\cD_\tau(0,10)$ for all $i\in [k]$ and 
\item[\textbf{P2}] $\dist(\sigma_i,\sigma_j)\geq 2n\kappa$ for all $i\neq j$.
\end{description}
Then
	\begin{align}\label{prop_part_equ_0}
	|P(\tau)| \leq n\cdot\binom{n}{n10\ln(k)/k}^{k} \leq n\cdot\bcfr{\eul k}{10\ln k}^{10n\ln k}\leq \exp\br{10 n\ln^2k}.
	\end{align} 
Further, if $\sigma_1,\sigma_2\in\{0,1\}^n$ are assignments such that $\dist(\sigma_1,\sigma_2)\geq 2\kappa n$, then 
	the number of possible unsatisfied clauses under both $\sigma_1$ and $\sigma_2$ satisfies
\begin{align}\label{prop_part_equ_11}
	|C(\sigma_1) \cap C(\sigma_2)|=(n-\dist(\sigma_1,\sigma_2))^k\leq ((1-2 \ln(k)/k)n)^k \leq k^{-2}n^k;
\end{align}
this is because a clause that is unsatisfied under both $\sigma_1,\sigma_2$ must not contain any literals on which the two assignments differ. 
We are going to upper bound the probability that for $(\sigma_i)_{i\in[k]}\in P(\tau)$ assignment $\sigma_i$
renders at most $ \rho n/10 $ clauses of $\PHI'$ unsatisfied given that all of $\sigma_1,\ldots,\sigma_{i-1}$ do so any $i=1,\ldots,k$.
The probability that this event occurs is upper bounded by the probability that $\PHI'$ contains at most $\rho n/10$ clauses from the set
\begin{align}\label{prop_part_equ_12}
	C(\sigma_i|\sigma_1,\ldots,\sigma_{i-1})=C(\sigma_i) \setminus \bigcup_{j=1}^{i-1} C(\sigma_j).
\end{align}
The estimate (\ref{prop_part_equ_11}) and inclusion/exclusion 
yield
\begin{align*}
	|C(\sigma_i|\sigma_1,\ldots,\sigma_{i-1})|\geq n^k(1-(i-1)k^{-2}).
\end{align*}
Hence, if we let
$Z_i=\Bin(\lfloor  n^k(1-(i-1)k^{-2})\rfloor,q)$, then
\begin{align}\label{prop_part_equ_1}
	\pr\brk{\sigma_i\in T(\vphi')|\sigma_1,\ldots,\sigma_{i-1}\in T(\vphi')} \leq \pr\brk{Z_i\leq \rho n/10}
\end{align}
(this step required that the clauses of $\vphi'$ appear independently).
By the Chernoff bound, for $i\leq k$ we have 
\begin{align}\label{prop_part_equ_4}
	\pr\brk{Z_i\leq \rho n/10}&\leq\exp\br{-\rho n/15}
\end{align}
Consequently, \textbf{P2}, (\ref{prop_part_equ_1}) and (\ref{prop_part_equ_4}) yield for any $(\sigma_i)_{i\in[k]}\in P(\tau)$,
\begin{align}\label{prop_part_equ_5}
	\pr \brk{\sigma_1,\ldots,\sigma_k\in T(\vphi)}&=\prod_{i=1}^k \pr\brk{\sigma_i\in T(\vphi)|\sigma_j\in T(\vphi) \text{ for all } j<i}
		\leq \exp\br{-k\rho n/15}.
\end{align}

Further, let $Q(\vphi',\tau)$ be the set of all $k$-tuples $(\sigma_i)_{i\in[k]}\in P(\tau)$ such that $\sigma_1,\ldots,\sigma_k\in T(\vphi')$.
Then (\ref{prop_part_equ_0}) and (\ref{prop_part_equ_5}) imply
\begin{equation}\label{prop_part_equ_51}
\Erw\brk{Q(\vphi',\tau)}\leq\exp\brk{n\br{10\ln^2(k)-k\rho/15}}.
\end{equation}
Summing (\ref{prop_part_equ_51}) on $\tau\in\{0,1\}^n$ and using $\rho\geq 195\ln^2(k)/k$, we get
\begin{align}\label{eqSamSum}
	\sum_{\tau\in\{0,1\}^n}\Erw\abs{Q(\vphi',\tau)}
	\leq\exp\brk{n\br{(2+10)\ln^2(k)-13\ln^2(k)}}
	= \exp(-\Omega(n)).
\end{align}

Finally, assume that $\PHI'$ violates {\bf Q2}.
Then there is $\tau\in\{0,1\}^n$ such that $Q(\vphi',\tau)\neq\emptyset$, because our construction of $\cM'$ ensures that $\cM'\subset T(\PHI')$
and that the pairwise distance of assignments in $\cM$ is at least $2n\kappa$.
Consequently, (\ref{eqSamSum}) shows together with Markov's inequality that 
$\PHI'$ violates {\bf Q2} with probability at most $\exp(-\Omega(n))$.
Thus, the assertion follows by transferring this result to $\PHI$ via (\ref{eqeq}).
\end{proof}

\begin{lemma}\label{lem_Q3}
$\vphi$ satisfies \textbf{Q3} \whp
\end{lemma}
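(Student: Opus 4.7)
The plan is to prove a slightly stronger, mist-free statement that implies \textbf{Q3}: namely, \whp\ for every $\sigma\in\{0,1\}^n$ with $\cU_\varPhi(\sigma)>\rho n/10$ and every set $W\subseteq\{x_1,\ldots,x_n\}$ with $|W|\leq 100\kappa n$, we have $X_\varPhi(W,\sigma)\leq k\cU_\varPhi(\sigma)/10$. This immediately yields \textbf{Q3}: for any $\mu\in\cM$ and $\sigma\in\cD_\mu(0,100)\setminus T(\varPhi)$, the set $W=\Delta(\mu,\sigma)$ has $|W|\leq 100\kappa n$ and $\cU_\varPhi(\sigma)>\rho n/10$ by definition of $T(\varPhi)$. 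This reformulation has the advantage that the statement no longer refers to the random set $\cM$, and one can use a plain union bound over $(\sigma,W)$.

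The key observation is that under the $(2n)^{km}$ uniform model, conditional on $\cU_\varPhi(\sigma)=U$, each of the $U$ unsatisfied clauses is an independent uniformly random $k$-tuple of literals falsifying $\sigma$, and so each of its $k$ positions hits a variable in $W$ with probability $|W|/n$ independently. Consequently, $X_\varPhi(W,\sigma)\mid\cU_\varPhi(\sigma)=U\sim \Bin(kU,|W|/n)$. Since $|W|/n\leq 100\kappa=100\ln k/k$ is much less than $1/10$ for large $k$, the Chernoff bound in \Lem~\ref{lem_chernoff} gives
\begin{align*}
\pr\brk{X_\varPhi(W,\sigma)>kU/10\mid \cU_\varPhi(\sigma)=U}\leq \exp\br{-kU\cdot\KL{1/10}{|W|/n}},
\end{align*}
and a direct calculation (worst case $|W|/n=100\kappa$) yields $\KL{1/10}{100\kappa}\geq (\ln k)/20$ for $k$ large.

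Summing over $U\geq\rho n/10$ and using $k\rho\geq 195\ln^2 k$, one finds that for each fixed pair $(\sigma,W)$,
\begin{align*}
\pr\brk{X_\varPhi(W,\sigma)>k\cU_\varPhi(\sigma)/10,\ \cU_\varPhi(\sigma)>\rho n/10}\leq \exp(-k\rho n\ln k/200)\leq \exp(-n\ln^3 k).
\end{align*}
There are $2^n$ choices of $\sigma$ and, just as in the proof of \Lem~\ref{lem_Q1}, the number of admissible $W$ is at most $\binom{n}{\leq 100\kappa n}\leq\exp(O(n\ln^2 k/k))$ by Stirling. Hence the overall failure probability is bounded by
\begin{align*}
\exp\br{n\ln 2+O(n\ln^2 k/k)-n\ln^3 k}=o(1)
\end{align*}
for $k$ sufficiently large.

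The main obstacle, and the reason this proof works at all, is beating the $2^n=\eul^{n\ln 2}$ factor coming from the union bound over $\sigma$. This is precisely where the $\ln^2 k$ factor in the density hypothesis of \Thm~\ref{theo_1} enters: the bound $k\rho\geq 195\ln^2 k$ boosts the Chernoff exponent from $\Omega(n\ln k)$ (which would fail against $2^n$) to $\Omega(n\ln^3 k)$ (which wins handily). With a weaker density assumption the Chernoff decay would not dominate the union bound, and the argument would collapse.
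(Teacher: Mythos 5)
Your proof is correct and follows essentially the same route as the paper: observe that conditional on $\cU_\varPhi(\sigma)=U$ the quantity $X_\varPhi(W,\sigma)$ is $\Bin(kU,|W|/n)$, apply the Chernoff bound, and union-bound over all pairs $(\sigma,W)$ without any reference to the mist (the paper's proof is likewise mist-free, summing over all pairs $(\sigma,\tau)$). Your version is marginally sharper (you keep the $\ln k$ factor from the KL term and use $\binom{n}{\leq 100\kappa n}$ rather than $2^n$ for the second union-bound factor), but this is not needed; one small caveat is that your closing remark overstates the role of the density hypothesis for this particular lemma---even $\rho=\Theta(\ln k/k)$ would give a Chernoff exponent of order $n\ln^2 k$, which still beats the union bound, so the $\ln^2 k$ in the density is really forced by \textbf{Q2}, not \textbf{Q3}.
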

\begin{proof}
Let $\cP=\cP_{\PHI}$ be the number of pairs $(\sigma,\tau)\in \{0,1\}^n\times(\cD_\sigma(0,100)\setminus T(\vphi))$
	such that $X_\vphi(\Delta(\sigma,\tau))>k\cU_\vphi(\tau)/10$.
To estimate $\cP$ fix a pair $(\sigma,\tau)$ and let $\cP_\vphi(\sigma,\tau)$ be the event that $X_\vphi(\Delta(\sigma,\tau))>k\cU_\vphi(\tau)/10$.
If $\tau\in \cD_\sigma(0,100)\setminus T(\vphi)$, then $\tau$ leaves at least $\cU_{\PHI}(\tau)\geq\rho n/10$ clauses unsatisfied.
More precisely, given $\cU_{\PHI}(\tau)$ each unsatisfied clause consists of $k$ independent random literals that are unsatisfied under $\tau$.
Since $\cD_\sigma(0,100)$, for any one of the $k\cU_{\PHI}(\tau)$ underlying variables the probability of belonging to $\Delta(\sigma,\tau)$ equals
$\Delta(\sigma,\tau)/n\leq 100\kappa$.
Therefore, \Lem~\ref{lem_chernoff} shows that
	\begin{align}\label{eqPst}
	\pr\brk{\cP_\vphi(\sigma,\tau)}&\leq\pr\brk{\Bin(k|\cU_{\PHI}(\tau)|,\Delta(\sigma,\tau)/n)>k\cU_\vphi(\tau)/10}\leq\exp(-k\rho n/10).
	\end{align}

Summing (\ref{eqPst}) on $\sigma\in\{0,1\}^n$ and $\tau\in\cD_\sigma(0,100)$ and using our assumption on $\rho$, we get
\begin{align*}
	\Erw\brk{\cP}\leq\sum_{\sigma,\tau}\pr\brk{\cP_\vphi(\sigma,\tau)}\leq4^n\exp(-k\rho n/10)\leq 2^{-n}
\end{align*}
Thus, the assertion follows from Markov's inequality.
\end{proof}

\noindent
Finally, Proposition \ref{prop_quasi} follows directly from Lemma \ref{lem_Q1} to \ref{lem_Q3}.

\end{document}